\documentclass[12pt,letterpaper]{amsart}

%
\usepackage{graphicx}                
\usepackage[tight,center]{subfigure} 
\usepackage{enumerate}               
\usepackage{color}                
\usepackage{psfrag}
\usepackage{url}
\usepackage{multicol}

\usepackage[margin=1.2in]{geometry}

\makeatletter
\newtheorem*{rep@theorem}{\rep@title}
\newcommand{\newreptheorem}[2]{%
\newenvironment{rep#1}[1]{%
 \def\rep@title{#2 \ref{##1}}%
 \begin{rep@theorem}}%
 {\end{rep@theorem}}}
\makeatother

%
%

%
\newtheorem{theorem}{Theorem}
\newreptheorem{theorem}{Theorem}

\newtheorem{algorithm}[theorem]{Algorithm}

\theoremstyle{definition}
\newtheorem{defn}[theorem]{Definition}
\newtheorem{remark}[theorem]{Remark}

%
%

%

\newcommand{\knotinfo}{\emph{KnotInfo}}

\newcommand{\R}{\mathbb{R}}
\newcommand{\regina}{\emph{Regina}}
\newcommand{\snappy}{\emph{SnapPy}}
\newcommand{\tri}{\mathcal{T}}

\newcommand{\co}{\colon\thinspace}

\DeclareMathOperator{\wt}{wt}

%
%

\begin{document}

\title[Computing closed essential surfaces in knot complements]%
    {Computing closed essential surfaces in \\ knot complements}
\author{Benjamin A.~Burton}
\address{School of Mathematics and Physics \\
         The University of Queensland \\
         Brisbane QLD 4072 \\
         Australia}
\email{bab@maths.uq.edu.au}
\author{Alexander Coward}
\address{School of Mathematics and Statistics \\
         The University of Sydney \\
         Sydney NSW 2006 \\
         Australia}
\email{coward@maths.usyd.edu.au}
\author{Stephan Tillmann}
\address{School of Mathematics and Statistics \\
         The University of Sydney \\
         Sydney NSW 2006 \\
         Australia}
\email{tillmann@maths.usyd.edu.au}
\thanks{The authors are supported by the Australian Research Council
        under the Discovery Projects funding scheme (projects DP1094516
        and DP110101104).}

\date{}

\begin{abstract}
    We present a new, practical algorithm to test whether a
    knot complement contains a closed essential surface.
    This property has important theoretical and algorithmic consequences;
    however, systematically testing it has until now been infeasibly slow,
    and current techniques only apply to specific families of knots.
    As a testament to its practicality, we run the algorithm over
    a comprehensive body of 2979 knots, including
    the two 20-crossing dodecahedral knots,
    yielding results that were not previously known.
    
    The algorithm derives from the original Jaco-Oertel framework, involves
    both enumeration and optimisation procedures, and
    combines several techniques from normal surface theory.
    This represents substantial progress in the practical implementation of
    normal surface theory, in that we can systematically solve
    a theoretically double exponential-time problem for significant inputs.
    Our methods are relevant for other difficult computational
    problems in 3-manifold theory, ranging from testing for
    Haken-ness to the recognition problem for knots, links and 3-manifolds.
\end{abstract}

%
%

\maketitle

\section{Introduction}

In the study of 3--manifolds, essential surfaces have been of central
importance since Haken's seminal work in the 1960s. 
An essential surface may be regarded as `topologically minimal', and there has since been extensive research  into 3-manifolds, called \emph{Haken 3-manifolds}, that contain an essential surface. The existence of such a surface has
profound consequences for both the topology and geometry of a 3--manifold
\cite{haken68-overview, jaco79-jsj-book,
johannson79-book, johannson79-mapping,
thurston86-deformation, waldhausen68-large}.


Given any closed 3-manifold,
specified by a triangulation, it is a theorem of Jaco and Oertel
\cite{jaco84-haken} from 1984 that one may algorithmically test for the
existence of a closed essential surface. However, their algorithm
has significant intricacies and is of double-exponential complexity in terms of the input size,
putting it well beyond the scope of a practical implementation. 

In this paper we present for the first time a practical algorithm that
can systematically test a significant class of 3-manifolds
for the existence of a closed essential surface, and which is both
efficient in practice and always conclusive.
To illustrate its power, we run this algorithm over a comprehensive
body of input data, yielding computer proofs of new mathematical
results.

The 3-manifolds we examine in this
paper are the motivating spaces for 3-manifold theory: knot complements.
These are the spaces that arise by removing a knotted curve from
3-dimensional space, although our methods can be extended to apply
to a far wider class of 3-manifolds. See the full version of this paper
for  generalisations.  In this paper we work with two collections
of input data.  First, for
each of the 2977 non-trivial prime knots that can be drawn with a diagram of at most 12
crossings, we determine whether its complement contains a closed essential
surface. If there is no such surface, the knot is called \emph{small},
otherwise we call it \emph{large}.  Second, we apply our algorithm to
resolve, in the affirmative, a question of Michel Boileau
\cite{boileau12-dodec} who enquired whether two special 20-crossing knots called
\emph{dodecahedral knots} contain a closed essential surface in their
complements. This question was recently also independently resolved by Jessica
Banks \cite{banks12-dodec} using non-computational techniques.

The algorithm presented here is theoretically significant because it is
the first algorithm in the literature for testing largeness of arbitrary
knots.  However, more important is its practical significance: this is the
first conclusive algorithm of this type that is implemented and fast enough
for real-world use.  The prior state-of-the-art algorithm for detecting
essential surfaces was used to prove that the Weber-Seifert
dodecahedral space is Haken \cite{burton12-ws}; however, although it resolved a
long-standing open problem, this prior algorithm relies on heuristic
methods that only work for certain triangulations, and are only
conclusive if no essential surface exists.  In contrast, the algorithm
described here can work with arbitrary triangulations of knot
complements, and is found to be effective regardless of the final result.

Our methods can be applied to related invariants of knots and 3-manifolds.
For instance,
the smallest genus $g$ of a closed essential surface is an important
knot invariant about which little is known for the case $g \geq 2$,
and our algorithm opens the door to formulating and testing new hypotheses.
These methods may also be extended to test a wide variety of
3-manifolds for Haken-ness and related properties.
More broadly, iterated
exponential complexity algorithms arise frequently in 3-manifold theory,
and our methods give an outline for how such problems, like the
recognition problem for knots and 3-manifolds, may one day be within the
realm of a practical implementation. 

We base our work on the framework of the Jaco-Oertel algorithm for
testing for closed incompressible surfaces.  This uses
\emph{normal surfaces}, which allow us to translate topological
questions about surfaces into the setting of integer and linear programming.
The framework consists of two stages: the first constructs a finite list of
candidate essential surfaces, and the second tests each surface in the
list to see if it is essential. A key difficulty with this framework,
which our algorithm also inherits,
is that both stages have running times that are
worst-case exponential in their respective input sizes, and
combining them in any obvious way leads to a double-exponential
complexity solution.

Despite this significant hurdle, we introduce several innovations that cut
down the running time enormously for both stages. Our optimisation for the
first stage involves a combination of established techniques that, though well
understood individually, require new ideas and theory in order to
work harmoniously together.
For the second stage we combine branch-and-bound techniques from
integer programming with the Jaco-Rubinstein procedure for crushing
surfaces within triangulations.
In more detail:

\begin{itemize}
\item
For the {first stage} (enumerating candidate essential surfaces),
we combine several techniques.
First, we wish to create a triangulation for
each knot complement that contains as few tetrahedra as possible. If one
uses classical triangulations one needs as many as $50$ tetrahedra for
some knots in the 12-crossing tables, a size for which enumerating candidate
surfaces is thoroughly infeasible even for modern high-performance machines.
We therefore use \emph{ideal triangulations} for knot
complements, which are decompositions of these spaces into tetrahedra
with their vertices removed. These introduce some significant
theoretical difficulties, but they are much smaller with roughly half as
many tetrahedra.

Second, we use a variant of normal surface theory based on
\emph{quadrilateral coordinates}.
The appeal is that this brings the dimension of the underlying integer
and linear programming problems down from $7t$ in the classical setting
to $3t$, where $t$ is the number tetrahedra in the input 3-manifold.
These coordinates were known to Thurston and Jaco
in the 1980s, and first appeared in print in work of
Tollefson~\cite{tollefson98-quadspace}.

A theoretical difficulty arises when combining ideal triangulations with
quadrilateral coordinates: this introduces objects called
\emph{spun-normal surfaces}, which are properly embedded non-compact surfaces
(essentially built from infinitely many pieces).  We counter this
by introducing extra linear constraints called \emph{boundary equations}
which, with the development of appropriate theory, restrict the solution space
in question to closed surfaces only.  In particular,
using an extension of the work of Jaco and Oertel~\cite{jaco84-haken} from
compact manifolds to non-compact manifolds by Kang~\cite{kang05-spun},
we show in Theorem~\ref{thm:some extremal is closed essential}
that for each manifold under consideration, there is
a finite, constructible set of normal surfaces with the property that if
the manifold in question contains a closed essential surface, then one
must exist in this set.

\item
For the {second stage} (testing whether a candidate surface is essential),
the Jaco-Oertel approach cuts along each candidate surface and
inspects the boundary of the resulting 3-manifold to see if it admits a
\emph{compression disc} (such a disc certifies that a surface is non-essential).
The key difficulty is that one requires a new triangulation for the cut-open
3-manifold: since the candidate surface may be very complicated,
any natural scheme for cutting and re-triangulating yields a new triangulation
with exponentially many tetrahedra in the worst case, taking us far
beyond the realm in which normal surface theory has traditionally been
feasible in practice.
Since these new triangulations are the input for stage two,
which is itself exponential time, we now see where the double
exponential arises, and why the Jaco-Oertel framework has long been
considered far from practical. 

We resolve this significant problem using a blend of techniques.
First, we use strong simplification heuristics to reduce the number of
tetrahedra.  Next, we replace the traditional (and very expensive)
enumeration-based search for compression discs with an
\emph{optimisation} process that maximises Euler characteristic.
This uses the branch-and-bound techniques of \cite{burton12-unknot},
and allows us to quickly focus on a single
candidate compression disc.  We employ the crushing techniques
of Jaco and Rubinstein \cite{jaco03-0-efficiency} to quickly test
whether this is indeed a compression disc, and (crucially) to reduce the
size of the triangulation if it is not.

More generally, this issue of iterated-exponential complexity, coming from
cutting and re-triangulating, arises with ubiquity when considering objects
called \emph{normal hierarchies}.  These hierarchies are key when solving
more difficult problems such as the recognition problem for knots and
3-manifolds. Our approach to stage two is both fast in practice and
theoretically correct, making it a substantial breakthrough that
indicates that a practical implementation of these more difficult
algorithms might indeed be possible.
\end{itemize}

\section{Preliminaries}

\subsection{Knots, surfaces and triangulations}

A 3-manifold is a mathematical object that locally looks like
3-dimensional Euclidean space. Because every topological 3-manifold admits
precisely one piecewise-linear structure (up to PL-homeo\-morph\-ism)
\cite{moise52-triang},
in practice this means that 3-manifolds may be studied via
triangulations. A \emph{triangulation} of a compact 3-manifold $M$ is a
description of $M$ as the disjoint union of a finite collection of
3-simplices with their faces identified in pairs, as shown in
Figure~\ref{glueth}.

\begin{figure}[h!]
\centering
\includegraphics[scale=1]{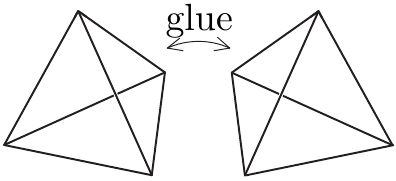}
\caption{A 3-manifold may be specified by a triangulation.} \label{glueth}
\end{figure}

A triangulation $\tri$ for a 3-manifold $M$ gives rise to
\emph{vertices}, \emph{edges}, \emph{faces} and \emph{tetrahedra} in
$M$. Edges whose
interior lies in the interior of $M$ are called \emph{interior
edges}, and edges that lie entirely on the boundary of $M$ are called
\emph{boundary edges}. In practice, a tetrahedron in $M$ might not be
embedded; for instance, we even allow two faces of a tetrahedron to be
identified in $M.$ For a precise description of our set-up, please see
\S\ref{subsec:Triangulations}; and for an example, see Appendix~\ref{app:fig8}.

Such a triangulation can only specify a compact
3-manifold. If instead of identifying the faces of tetrahedra, we
identify the faces of a finite collection of tetrahedra minus their
vertices, this constitutes an \emph{ideal triangulation} for the
resulting non-compact quotient space.

The 3--manifolds we study in this paper are \emph{knot complements}.
These are 3-manifolds obtained by removing a \emph{knot}, which is
knotted closed curve in $\R^3$, from 3--dimensional Euclidean space. In
practice it is convenient to compactify $\R^3$ with a point at infinity,
yielding a compact 3-manifold called the \emph{3-sphere}, denoted $S^3.$
One then removes the knot from $S^3$ instead. For a knot $K$
we call the resulting non-compact 3-manifold $S^3 \backslash K$ the
\emph{complement of $K$}.
Knot complements always have ideal triangulations
\cite[Proposition~1.2]{tillmann08-finite}.

If instead we remove from $S^3$ a small
open neighborhood of a knot $K$
we obtain a compact 3-manifold called the
\emph{exterior of $K$}. Since they are compact, knot exteriors may be
specified by triangulations. There are well established techniques for
translating between an ideal triangulation for a knot complement and a
triangulation for the corresponding knot exterior. 

A knot $K$ is called \emph{non-trivial} if it is not the boundary of an
embedded disc in $S^3.$

In this paper we are interested in \emph{closed essential surfaces} in knot
complements. We define these now. Let $K$ be a knot, and let $M$ be the
complement of $K$. A connected two-sided closed surface with
positive genus $S$, embedded in $M$, is a \emph{closed essential surface
in $M$} if  the following properties hold: (i)~the surface $S$ is
\emph{incompressible} (as defined below); and (ii)~the surface $S$ is not
\emph{boundary parallel}, that is, not ambient isotopic to a small tube
running around $K.$

The definition of incompressible is as follows. A \emph{compression
disc} for an embedded surface $S$ in a 3-manifold $M$ is an embedded
disc $D\subset M$ for which (i)~$D \cap S$ equals the boundary of $D$
(denoted $\partial D$); and (ii)~$\partial D$ is a non-trivial curve in $S$ (meaning $\partial D$ does not bound a disc in $S$). If the surface $S$ admits
a compression disc, then we say $S$ is \emph{compressible},
otherwise $S$ is \emph{incompressible}.  An equivalent, algebraic
criterion can be found in \S\ref{subsec:essential}.


\subsection{Quadrilateral coordinates and $Q$--matching equations}

We use normal surface theory to search for essential
surfaces. A \emph{normal surface} in a (possibly ideal) triangulation
$\tri$ is a properly embedded surface which intersects each
tetrahedron of $\tri$ in a disjoint collection of
\emph{triangles} and \emph{quadrilaterals}, as shown in Figure
\ref{normaldiscs}. These triangles and quadrilaterals are called
\emph{normal discs}. In an ideal triangulation of a non-compact
3--manifold, a normal surface may contain infinitely many triangles;
such a surface is called \emph{spun-normal}.

\begin{figure}[h!]  
\centering
\includegraphics[scale=0.7]{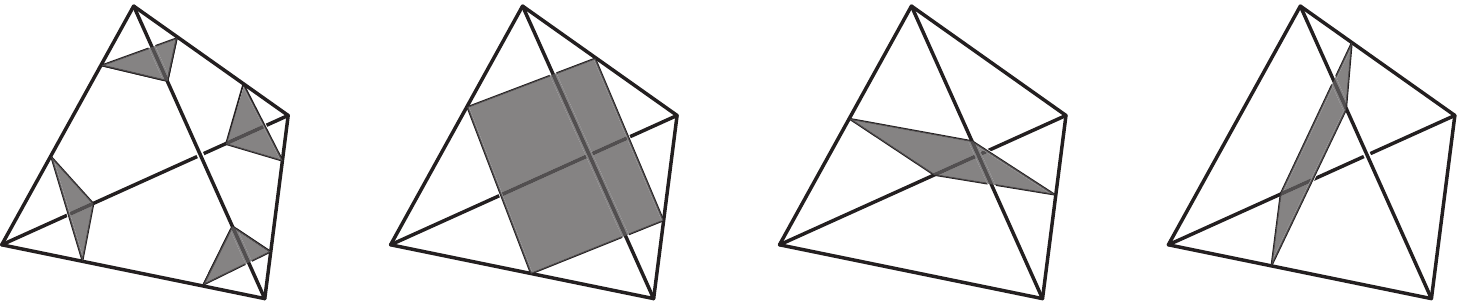}
\caption{The seven types of normal disc in a tetrahedron.} 
\label{normaldiscs}
\end{figure}

We now describe an algebraic approach to normal surfaces.
The key observation is that each normal surface contains
finitely many quadrilateral discs, and is uniquely determined
(up to normal isotopy) by these quadrilateral discs. Here a \emph{normal
isotopy} of $M$ is an isotopy that keeps all simplices of all dimensions
fixed. Let $\square$ denote the set of all normal isotopy classes,
or \emph{types}, of normal quadrilateral discs,
so that $|\square| = 3t$ where $t$ is the number of tetrahedra in $\tri$.
We identify $\R^\square$ with $\R^{3t}.$
Given a normal surface $S,$ let $x(S) \in \R^\square = \R^{3t}$
denote the integer vector for which each coordinate $x(S)(q)$
counts the number of quadrilateral discs in $S$ of type $q \in \square$.
This \emph{normal $Q$--coordinate} $x(S)$ satisfies the
following two algebraic conditions.

First, $x(S)$ is admissible.
A vector $x \in \R^\square$ is \emph{admissible} if
$x \ge 0$, and for each tetrahedron $x$ is non-zero
on at most one of its three quadrilateral types. 
This reflects the fact that an embedded surface cannot contain
two different types of quadrilateral in the same tetrahedron.

Second, $x(S)$ satisfies a linear equation for each interior
edge in $M,$ termed a \emph{$Q$--matching equation}.
Intuitively, these equations arise from the fact that as one
circumnavigates the earth, one crosses the equator from north to south
as often as one crosses it from south to north.
We now give the precise form of these equations.
To simplify the discussion,
we assume that $M$ is oriented and all tetrahedra are given
the induced orientation; see \cite[Section~2.9]{tillmann08-finite} for
details.

\begin{figure}[h]
    \centering
    \subfigure[The abstract neighbourhood $B(e)$]{%
        \label{fig:matchingquadbdry}%
        \includegraphics[scale=1]{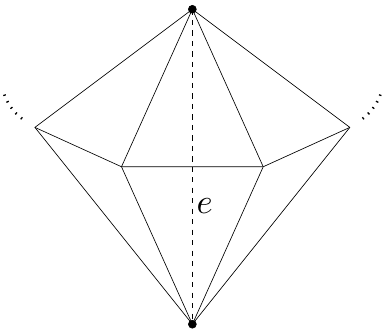}}
    \qquad
    \subfigure[Positive slope]{%
        \label{fig:matchingquadpos}%
        \includegraphics[scale=1]{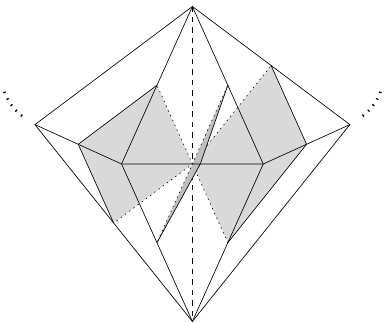}}
    \qquad
    \subfigure[Negative slope]{%
        \label{fig:matchingquadneg}%
        \includegraphics[scale=1]{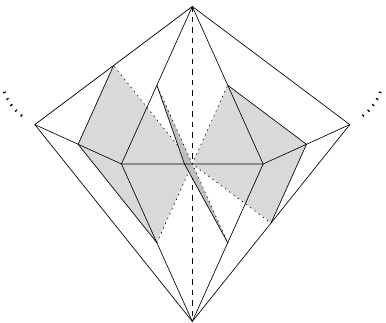}}
    \caption{Slopes of quadrilaterals}
    \label{fig:slopes}
\end{figure}

Consider the collection $\mathcal{C}$ of all (ideal) tetrahedra meeting
at an edge $e$ in $M$ (including $k$ copies of tetrahedron $\sigma$ if $e$ occurs
$k$ times as an edge in $\sigma$).  We form the \emph{abstract
neighbourhood $B(e)$} of $e$ by pairwise identifying faces of tetrahedra
in $\mathcal{C}$ such that there is a well defined quotient map from
$B(e)$ to the neighbourhood of $e$ in $M$; see
Figure~\ref{fig:matchingquadbdry} for an illustration.  Then  $B(e)$ is a ball
(possibly with finitely many points missing on its boundary).  We think
of the (ideal) endpoints of $e$ as the poles of its boundary sphere, and
the remaining points as positioned on the equator.

Let $\sigma$ be a
tetrahedron in $\mathcal{C}$. The boundary square of a normal
quadrilateral of type $q$ in $\sigma$ meets the equator of $\partial
B(e)$ if and only it has a vertex on $e$. In this case, it has a slope
of a well--defined sign on $\partial B(e)$ which is independent of the
orientation of $e$. Refer to Figures~\ref{fig:matchingquadpos} and
\ref{fig:matchingquadneg}, which show quadrilaterals with
\emph{positive} and \emph{negative slopes} respectively.

%

Given a quadrilateral type $q$ and an edge $e,$ there is a
\emph{total weight} $\wt_e(q)$ of $q$ at $e,$ which records the sum of
all slopes of $q$ at $e$ (we sum because $q$ might meet $e$ more than
once, if $e$ appears as multiple edges of the same tetrahedron).
If $q$ has no corner on $e,$ then we set
$\wt_e(q)=0.$ Given edge $e$ in $M,$ the $Q$--matching equation of $e$
is then defined by $0 = \sum_{q\in \square}\; \wt_e(q)\;x(q)$.

\begin{theorem}\label{thm:admissible integer solution gives normal}
For each $x\in \R^\square$ with the properties that $x$ has integral coordinates, $x$ is admissible and
$x$ satisfies the $Q$--matching equations, there is a (possibly
non-compact) normal surface $S$ such that $x = x(S).$ Moreover, $S$ is
unique up to normal isotopy and adding or removing vertex linking surfaces, i.e.
normal surfaces consisting entirely of normal triangles.
\end{theorem}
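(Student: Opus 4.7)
The plan is to construct $S$ by first placing the quadrilateral discs dictated by $x$ and then showing that the $Q$--matching equations are exactly what is needed to complete the configuration to a properly embedded normal surface by adjoining appropriate normal triangles. In each tetrahedron $\sigma$, I place $x(q)$ parallel copies of a quadrilateral of type $q$ for the (at most one) type $q\in\square$ with $x(q)>0$; admissibility guarantees that these copies can be realised disjointly inside $\sigma$. Taken together across all tetrahedra, these discs form a properly embedded collection whose boundaries lie on the 2--skeleton of $\tri$ as a partial system of normal arcs.

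The core step is to extend this partial arc system on each face of $\tri$ to a complete normal arc system by inserting the correct number of triangle--type arcs, in such a way that the arcs induced on the two sides of every identified face agree. Fix an interior edge $e$ and work inside the abstract neighbourhood $B(e)$ of Figure~\ref{fig:matchingquadbdry}. Each quadrilateral having a corner on $e$ meets $\partial B(e)$ in an arc of well--defined positive or negative slope across the equator, as in Figures~\ref{fig:matchingquadpos} and \ref{fig:matchingquadneg}, and the $Q$--matching equation at $e$ says precisely that the signed intersection number of these arcs with the equator of $\partial B(e)$ vanishes. This algebraic cancellation is what permits the quadrilateral endpoints accumulating around $e$ to be paired up by inserting triangle arcs on the incident faces. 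The step I expect to be the main obstacle is verifying that these local completions, one per interior edge, can be performed simultaneously to produce a globally consistent arc system on the entire 2--skeleton; the key combinatorial observation is that the freedom on each face is controlled entirely by the two edges bounding it, so the $Q$--matching equations, which assemble to a condition at every interior edge, are not only necessary but sufficient for a global matching.

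Having fixed such a consistent arc system on the 2--skeleton, the third step is to fill in each tetrahedron with the prescribed quadrilaterals together with the number of triangles of each of the four triangle types determined by the arc counts on its faces. When $\tri$ is an ideal triangulation, the triangle counts near an ideal vertex may grow without bound, in which case the resulting normal surface is non--compact and spun--normal in the sense introduced earlier; this is the only source of non--compactness in the construction.

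For uniqueness, suppose $S$ and $S'$ are normal surfaces with $x(S)=x(S')=x$. Their quadrilateral discs agree up to normal isotopy, so $S$ and $S'$ can differ only in their triangle coordinates. The face--matching constraints force the difference of their triangle coordinate vectors to be a non--negative integer solution of the homogeneous normal arc--matching system supported on triangles alone, and in a 3--manifold triangulation such solutions are spanned exactly by the coordinate vectors of vertex linking surfaces. This yields the claimed indeterminacy and completes the uniqueness statement.
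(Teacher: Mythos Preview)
The paper does not supply its own proof of this theorem; immediately after the statement it simply refers the reader to Theorem~2.1 of Kang and Theorem~2.4 of Tillmann. Your outline is precisely the standard argument carried out in those references: realise the quadrilaterals dictated by $x$, use the $Q$--matching equations as the vanishing--of--holonomy condition that allows the partial arc system on the $2$--skeleton to be completed by triangle arcs, and then fill in triangles tetrahedron by tetrahedron (allowing infinitely many near ideal vertices). So in approach you are aligned with what the paper points to.

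Two small points are worth tightening. First, in the uniqueness paragraph you assert that the difference of the triangle coordinate vectors of $S$ and $S'$ is a \emph{non-negative} integer solution of the homogeneous arc--matching system; in fact the difference need not be non-negative, only integral. The conclusion is unaffected, since the integer solutions supported on triangles are still exactly the integer span of the vertex--link vectors, but the word ``non-negative'' should be dropped. Second, the sentence ``the freedom on each face is controlled entirely by the two edges bounding it'' is imprecise (a face has three edges); what you presumably mean is that, once the quadrilateral arcs are fixed, the standard matching equation across a face determines the \emph{difference} of the triangle numbers at the two adjacent tetrahedron corners, so the obstruction to a global choice of triangle numbers is a cocycle condition around each interior edge, and that cocycle is exactly the $Q$--matching equation. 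Stating it this way makes the ``necessary and sufficient'' claim transparent.
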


For a proof of Theorem \ref{thm:admissible integer solution gives normal} see Theorem 2.1 of \cite{kang05-spun} or Theorem 2.4 of
\cite{tillmann08-finite}.

The set of all $x\in \R^{\square}$ with the property that
(i)~$x\ge 0$ and (ii)~$x$ satisfies the $Q$--matching equations is denoted
$Q(\tri).$ This
naturally is a polyhedral cone.
Note however that the set of all admissible $x\in \R^{\square}$
typically meets $Q(\tri)$ in a non-convex set.


\subsection{Crushing triangulations}
\label{subsec:Crushing}

The crushing process of Jaco and Rubinstein
\cite{jaco03-0-efficiency} plays an important role in our
algorithms, and we informally outline this process here.
For the formal details we refer the reader to
\cite{jaco03-0-efficiency}.

Let $S$ be a two-sided normal surface in a triangulation $\tri$ of a compact
orientable 3-manifold $M$.  To \emph{crush} $S$ in $\tri$, we
(i)~cut $\tri$ open along $S$, which splits each tetrahedron
into a number of (typically non-tetrahedral) pieces;
(ii)~crush each resulting copy of $S$ on the boundary to a point,
which converts these pieces into tetrahedra, footballs and/or pillows
as shown in Figure~\ref{fig:jrpieces};
(iii)~flatten each football or pillow to an edge or triangle
respectively, as shown in Figure~\ref{fig:jrflatten}.

\begin{figure}[ht]
    \centering
    \subfigure[Pieces after crushing $S$ to a point]{%
        \label{fig:jrpieces}%
        \includegraphics[scale=0.5]{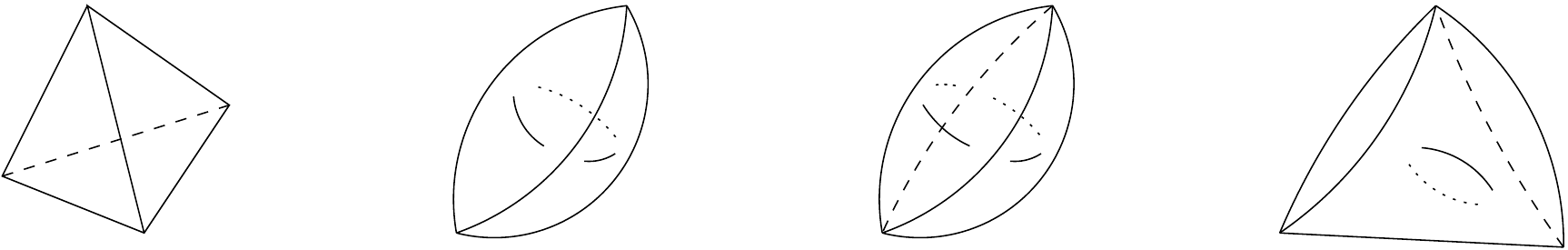}}
    \\
    \subfigure[Flattening footballs and pillows]{%
        \label{fig:jrflatten}%
        \includegraphics[scale=0.833]{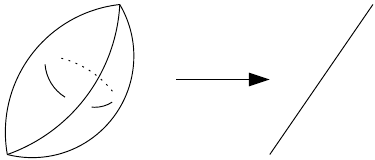}\quad
        \includegraphics[scale=0.833]{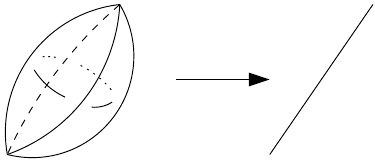}\quad
        \includegraphics[scale=0.833]{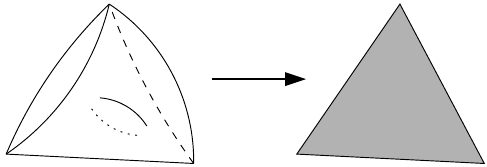}}
    \caption{Steps in the Jaco-Rubinstein crushing process}
    \label{fig-jrcrush}
\end{figure}

The result is a new collection of tetrahedra with a new set of face
identifications.  We emphasise that we \emph{only} keep track of face
identifications between tetrahedra:
any ``pinched'' edges or vertices fall apart, and any
lower-dimensional components with no tetrahedra at all simply disappear.
The resulting structure might not represent a 3-manifold
triangulation, and even if it does the flattening operations might have
changed the underlying 3-manifold in ways that we did not intend.




Although crushing can cause a myriad of problems in general,
Jaco and Rubinstein show that in some cases the operation behaves
extremely well \cite{jaco03-0-efficiency}.  In particular, if
$S$ is a normal sphere or disc, then after crushing we always obtain a
triangulation of some 3-manifold $M'$ (possibly disconnected, and
possibly empty) that is obtained from the original
$M$ by zero or more of the following operations:
\begin{itemize}
\item cutting manifolds open along spheres and filling the resulting boundary spheres with
3-balls;
\item cutting manifolds open along properly embedded discs;
\item capping boundary spheres of manifolds with 3-balls;
\item deleting entire connected components that are any of the 3-ball, the
3-sphere,
projective space $\R P^3,$ the lens space $L(3,1)$ or the product space $S^2 \times S^1.$
\end{itemize}

An important observation is that the number of tetrahedra that remain
after crushing
is precisely the number of tetrahedra that do not contain quadrilaterals of $S$.

\section{Closed normal surfaces in $Q$--space}
\label{sec:closed}

In this section we introduce the linear \emph{boundary equations},
with which we restrict the normal surface solution space to
closed surfaces only.

Let our knot complement be $M = S^3\setminus K.$ The ideal triangulation $\tri$ of $M$ has one ideal vertex, and its link is a torus. We view this torus $T$ as made up of normal triangles, one near each corner of each ideal tetrahedron. Let $\gamma \in H_1(T; \R).$ We now describe an associated linear functional $\nu(\gamma) \co \R^{\square}\to \R,$ which measures the behaviour along $\gamma$ of a normal surface near the ideal vertex. The idea is similar to the intuitive description of the $Q$--matching equations. As one goes along $\gamma$ and looks down into the manifold, normal quadrilaterals will (as Jeff Weeks puts it) \emph{come up from below} or \emph{drop down out of sight}. If the total number coming up minus the total number dropping down is non-zero, then the surface spirals towards the knot in the cross section $\gamma \times [0, \infty) \subset T \times [0, \infty)$ and the sign indicates the direction, see Figure~\ref{fig:spinning}(b). If this number is zero, then after a suitable isotopy the surface meets the cross section in a (possibly empty or infinite) union of circles, see Figure~\ref{fig:spinning}(c).

The torus $T$ has an induced triangulation consisting of normal triangles. Represent $\gamma$ by an oriented path on $T,$ which is disjoint from the 0--skeleton and meets the 1--skeleton transversely. Each edge of a triangle in $T$ is a normal arc. Give the edges of each triangle in $T$ transverse orientations pointing into the triangle and labelled by the quadrilateral types sharing the normal arc with the triangle; see Figure~\ref{fig:quad coods}. We then define $\nu(\gamma)$ as follows. Choosing any starting point on $\gamma,$ we read off a formal linear combination of quadrilateral types $q$ by taking $+q$ each time the corresponding edge is crossed with the transverse orientation, and $-q$ each time it is crossed against the transverse orientation (where each edge in $T$ is counted twice---using the two adjacent triangles). 

\begin{figure}[h]
    \centering
    \includegraphics[scale=1.1]{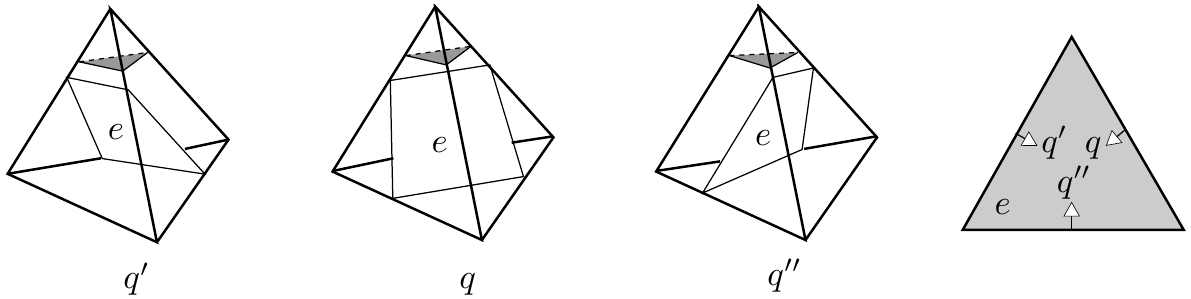}
    \caption{Coming up and dropping down}
    \label{fig:quad coods}
\end{figure}

Evaluating $\nu(\gamma)$ at some $x\in \R^{\square}$ gives a real number $\nu_{x}(\gamma).$ For example, taking a small loop around a vertex in $T$ and setting this equal to zero gives the $Q$--matching equation of the corresponding edge in $M;$ see Figure~\ref{fig:spinning}(a). For each $x\in Q(\tri),$ the resulting map $\nu_{x}\co H_1(T ; \R) \to \R$ is a well-defined homomorphism, which has the property that the surface in Theorem~\ref{thm:admissible integer solution gives normal} is closed if and only if $\nu_x = 0$ (see \cite{tillmann08-finite}, Proposition~3.3). Since $\nu_{x}\co H_1(T; \R) \to \R$ is a homomorphism, it is trivial if and only if we have $\nu_{x}(\alpha) = 0 = \nu_{x}(\beta)$ for any basis $\{\alpha, \beta\}$ of $H_1(T; \R).$ 

\begin{figure}[h]
  \begin{center}
    \subfigure[$0=\nu_x(\gamma) = \sum_{i=1}^{k} (-1)^i x(q_i)$ is the $Q$--matching equation]{
      \includegraphics[scale=1]{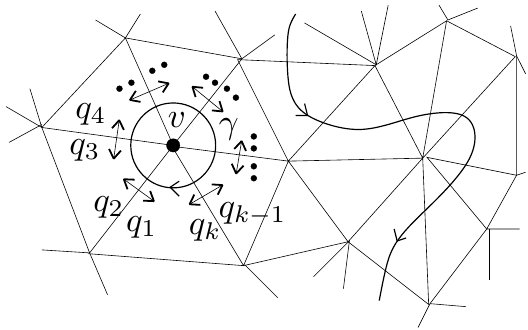}}
    \qquad
    \subfigure[spun $\Longleftrightarrow$ $\nu_x\neq 0$]{
      \includegraphics[scale=0.9]{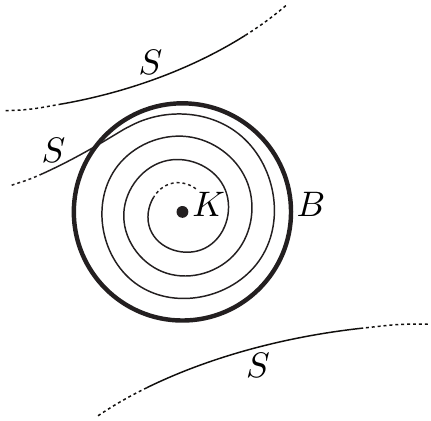}}
    \subfigure[not spun $\Longleftrightarrow$ $\nu_x= 0$]{
      \includegraphics[scale=0.9]{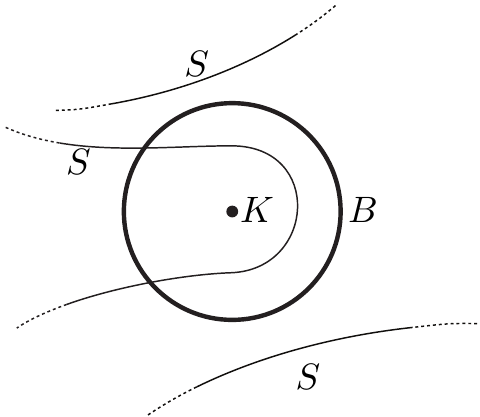}}
  \end{center}
  \caption{Boundary map determines $Q$--matching equations and spinning}
  \label{fig:spinning}
\end{figure}

We define $Q_0(\tri) = Q(\tri) \cap \{ x \mid \nu_x=0\},$ and call a two-sided, connected normal surface $F$ with $x(F)$ on an extremal ray of $Q_0(\tri)$ a \emph{$Q_0$--vertex surface.} The following result is based on the seminal work of  Jaco and Oertel~\cite{jaco84-haken}:

\begin{theorem}\label{thm:some extremal is closed essential}
Suppose $M$ is the complement of a non-trivial knot in $S^3.$ If $M$ contains a closed essential surface $S,$ then there is a normal, closed essential surface $F$ with the property that $x(F)$ lies on an extremal ray of $Q_0(\tri).$ Moreover, if $\chi(S)<0,$ then there is such $F$ with $\chi(F)<0.$ 
\end{theorem}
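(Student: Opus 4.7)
The plan is to exhibit the vertex surface $F$ via a decomposition of $x(S)$ across the extremal rays of $Q_0(\tri)$, then invoke a Jaco-Oertel--Kang style minimality argument to ensure at least one summand is itself closed essential. First, I would replace $S$ by a normal representative: since $K$ is non-trivial, $M$ is irreducible with a single torus cusp, and standard innermost-disc and isotopy arguments place $S$ in normal form while preserving closedness, incompressibility, and non--boundary-parallelism. In particular $\nu_{x(S)} = 0$, so $x(S) \in Q_0(\tri)$.

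Next, since $Q_0(\tri)$ is a rational polyhedral cone, I would write $x(S) = \sum_i \lambda_i v_i$ as a non-negative rational combination of primitive integer vectors $v_i$ generating the extremal rays. Coordinate-wise non-negativity forces admissibility of each $v_i$: if some $v_i$ hosted two distinct quadrilateral types in a common tetrahedron, then so would $x(S)$. By Theorem~\ref{thm:admissible integer solution gives normal}, each $v_i$ is realised (up to vertex-link summands, which I would discard) by a closed normal surface $F_i$; passing to a suitable connected two-sided component yields candidates for $F$.

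The crux is showing at least one of these candidates is essential. I would select the original $S$ to have minimum weight among all closed essential surfaces of $M$ in normal form. If every $F_i$ were compressible or boundary parallel, then a Haken-style regular exchange between $S$ and a copy of the offending $F_i$ would produce a closed normal essential surface $S'$ of strictly smaller weight, contradicting the choice of $S$. The key technical input here is Kang's extension of the Jaco-Oertel framework to ideal triangulations: precisely because $v_i \in Q_0(\tri)$ rather than merely $Q(\tri)$, the exchange stays within the closed surface world and preserves $\nu=0$. I expect this step --- transplanting the Jaco-Oertel compression-disc and surgery arguments into the spun-normal / ideal setting while maintaining admissibility, two-sidedness, and the $Q_0$ condition throughout --- to be the main obstacle.

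For the Euler characteristic refinement, linearity gives $\chi(S) = \sum_i \lambda_i \chi(F_i)$, so $\chi(S) < 0$ forces $\chi(F_i) < 0$ for at least one $i$. To guarantee that the essential vertex surface produced by the minimality argument has negative Euler characteristic, I would restrict the minimisation class in the previous step to closed essential surfaces of negative Euler characteristic, using that the only essential closed surfaces in $M$ with $\chi \geq 0$ are incompressible tori: any such torus component appearing in an exchange can be split off without disturbing the negative--$\chi$ residue of $S$, so the smaller-weight counterexample still lies in the restricted class.
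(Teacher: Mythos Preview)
Your overall architecture matches the paper's: normalise $S$, observe $x(S) \in Q_0(\tri)$, decompose over admissible extremal rays, and invoke the Jaco--Oertel/Kang machinery to certify that some vertex summand is essential. However, your description of the crux step is garbled in a way that matters. You do not ``exchange $S$ with a copy of the offending $F_i$''; rather, after clearing denominators you write $nS + \Sigma = V + W$ as a \emph{Haken sum} (with $V$ one of the vertex surfaces and $W$ absorbing the rest), pass to reduced form, and then Kang's Theorem~5.4 (the ideal-triangulation analogue of Jaco--Oertel Theorem~2.2) shows directly that $V$ is incompressible. The weight-minimality of $S$ is used \emph{inside} that theorem to show that patches in the reduced sum are incompressible and not discs; it is not a global contradiction of the form ``if all $F_i$ are bad then $S$ was not minimal.'' Note too that the paper minimises weight only within the isotopy class of $S$; your stronger condition implies this, but the isotopy-class version is what Kang's hypothesis actually requires, and one must further check that $nS$ inherits least weight from $S$ (it does, since $S$ is two-sided).

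A genuine gap is that Kang's theorem yields only \emph{incompressibility}, not essentiality: the vertex surface $V$ could be a boundary-parallel torus. Your sentence lumps ``compressible or boundary parallel'' together, but the Jaco--Oertel exchange mechanism addresses only the former. The paper handles boundary parallelism with a separate argument (adapted from Matveev): if in the reduced sum $nS + \Sigma = V + W$ the summand $V$ is a boundary-parallel, non-vertex-linking torus, one uses the product structure on the $V \times (0,1)$ side to find an innermost annulus patch of $W$ that can be isotoped across, contradicting reducedness. This is exactly the ``additional work'' needed when $\chi(S)=0$.

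Finally, your Euler-characteristic refinement via restricting the minimisation class is more convoluted than necessary, and the phrase ``split off any torus component appearing in an exchange'' is not obviously meaningful since the summands are geometrically entangled. The paper's route is cleaner: the Kang argument shows \emph{every} vertex summand $V_i$ is incompressible (not just one), additivity of $\chi$ then forces some $V_i$ with $\chi(V_i)<0$, and such a $V_i$ has genus at least $2$, hence cannot be boundary parallel in a knot complement and is automatically essential.
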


\begin{proof}[Sketch of proof] A complete proof of a more
general statement is given in Appendix~\ref{app:Jaco-Oertel}. The key ideas are as follows. Given a closed essential surface in $M,$
a standard argument shows that there is a \emph{normal} closed
essential surface in $M.$ Amongst all normal surfaces isotopic (but
not necessarily normally isotopic) to this, choose one that has
minimal number of intersections with the 1--skeleton (this is the PL
analogue of a minimal surface).  Denote this surface $S$.

If $S$ is not a vertex surface,
one can write it using a so-called \emph{Haken sum} of vertex surfaces,
which is a geometric realisation of the sum of $Q$--coordinate
vectors. However, a complication arises, since only a multiple of $S$ is
a Haken sum of vertex surfaces, and only up to
vertex linking tori; that is,
$nS + \Sigma = \sum n_iV_i = V + W,$ where $V$ is a
vertex surface, $\Sigma$ is vertex linking,
and all other terms are combined into the surface $W.$
Building on Jaco and Oertel~\cite{jaco84-haken},
Kang~\cite[Theorem~5.4]{kang05-spun} shows that $V$ and $W$ are
incompressible. Since Euler characteristic is additive under Haken sum,
the result follows if $\chi(S)<0.$ If $\chi(S)=0,$ additional work
is required to show that an essential torus cannot be written as a Haken
sum of boundary parallel tori.
\end{proof}

\section{Algorithms}
\label{sec:Detecting compressing discs}

Here we describe the new algorithm to test whether a knot is large or small
(i.e., whether its complement contains a closed essential surface).
In this extended abstract we restrict our attention to the common setting
of knots in the 3-sphere $S^3$.
See the full version of this paper
for an extension to the more general setting of links in arbitrary
closed orientable 3-manifolds, as well as searching for essential
surfaces in arbitrary closed orientable 3-manifolds (without knots or links).

We present the algorithm in two stages below.
Algorithm~\ref{alg-incompressible} describes a subroutine to test
whether a given closed surface is incompressible.
Algorithm~\ref{alg-large} is the main algorithm: it uses the results
of Section~\ref{sec:closed} to identify candidate essential surfaces, and
runs Algorithm~\ref{alg-incompressible} over each.

These algorithms contain a number of high-level and often intricate
procedures, many of which are described in separate papers.
For each algorithm, we discuss these procedures in further detail after
presenting the overall algorithm structure.

\begin{algorithm}[Testing for incompressibility]
\label{alg-incompressible}
Let $\tri$ be an ideal triangulation of a non-compact 3-manifold $M$
that is the complement of a non-trivial knot in $S^3.$
Let $S$ be a closed two-sided normal surface of genus $g \geq 1$
within $\tri$.  To test whether $S$ is incompressible in $M$:
\begin{enumerate}
    \item
    \label{en-alg-truncate}
    Truncate the ideal vertex of $\tri$ to obtain a compact manifold
    with boundary, cut $\tri$ open along the surface $S$, and retriangulate.
    The result is a pair of triangulations $\tri_1,\tri_2$ representing
    two compact manifolds with boundary $M_1,M_2$ (one on each side of
    $S$ in $M$).

    Let $B_1,B_2$ be the genus $g$ boundary components of $\tri_1$ and
    $\tri_2$ respectively that correspond to the surface $S$.
    Without loss of generality, suppose that the truncated
    ideal vertex was on the side of $M_2$; therefore $\tri_2$ has an
    additional boundary torus, which we denote $B_v$.

    \item
    For each $i=1,2$:
    \begin{enumerate}
        \item \label{en-alg-simplify}
        Simplify $\tri_i$ into a triangulation with no internal
        vertices and only one vertex on each boundary component,
        without increasing the number of tetrahedra.
        Let the resulting number of tetrahedra in $\tri_i$ be $n$.

        \item \label{en-alg-search}
        Search for a connected normal surface $E$ in $\tri_i$
        that is not a vertex link, has positive Euler
        characteristic, and (for the case $i=2$) does not
        meet the torus boundary $B_v$.

        \item \label{en-alg-nodisc}
        If no such $E$ exists, then there is no compressing disc
        for $S$ in $M_i$.  If $i=1$ then try $i=2$ instead, and if $i=2$
        then terminate with the result that $S$ is incompressible.

        \item \label{en-alg-crush}
        Otherwise, crush the surface $E$
        as explained in Section~\ref{subsec:Crushing}
        to obtain a new triangulation $\tri'_i$ (possibly disconnected,
        or possibly empty) with strictly fewer than
        $n$ tetrahedra.  If some component of $\tri'_i$ has the same
        genus boundary (or boundaries) as $\tri_i$ then it
        represents the same manifold $M_i$, and we return to
        step~\ref{en-alg-simplify} using this component of $\tri'_i$ instead.
        Otherwise we terminate with the result that $S$ is not incompressible.
    \end{enumerate}
\end{enumerate}
\end{algorithm}

\noindent
Regarding the individual steps of this algorithm:
\begin{itemize}
    \item Step~\ref{en-alg-truncate} requires us to truncate an ideal
    vertex and cut a triangulation open along a normal surface.
    These are standard (though intricate) procedures.
    To truncate a vertex we subdivide tetrahedra and then remove the
    immediate neighbourhood of the vertex.
    To cut along a normal surface, see \cite{burton12-ws} for a
    description of a manageable implementation.

    \item Step~\ref{en-alg-simplify} requires us to simplify
    a triangulation to use the fewest possible vertices, without
    increasing the number of tetrahedra.
    For this we begin with the rich polynomial-time simplification
    heuristics described in \cite{burton12-regina}.  In practice, for all
    $2979$ knots that we consider in Section~\ref{s-results},
    this is sufficient to reduce the triangulation
    to the desired number of vertices.

    If there are still extraneous vertices, we can remove these using
    the crushing techniques of Jaco and Rubinstein
    \cite[Section~5.2]{jaco03-0-efficiency}.  This might
    fail, but only if  $\partial M_i$ has  a compressing disc,
    or two boundary components of $M_i$ are separated by a sphere;
    both cases immediately certify that the surface $S$
    is compressible, and we can terminate immediately.

    \item Step~\ref{en-alg-search} requires us to locate a connected
    normal surface $E$ in $\tri_i$ that is not a vertex link, has
    positive Euler characteristic, and does not meet the torus
    boundary $B_v$.
    For this we use the recent method of
    \cite[Algorithm~11]{burton12-unknot}, which draws on
    combinatorial optimisation techniques:
    in essence we run a sequence of linear programs over a combinatorial
    search tree, and prune this tree using tailored branch-and-bound
    methods.  See \cite{burton12-unknot} for details.

    We note that this search is the bottleneck of
    Algorithm~\ref{alg-incompressible}:
    the search is worst-case exponential time, though in practice it
    often runs much faster \cite{burton12-unknot}.
    The exposition in \cite{burton12-unknot} works in the setting where
    the underlying triangulation is a knot complement, but the
    methods work equally well in our setting here.
    To avoid the boundary component $B_v$, we simply remove all normal
    discs that touch $B_v$ from our coordinate system.

    %
\end{itemize}

\begin{theorem} \label{thm-incompressible}
    Algorithm~\ref{alg-incompressible} terminates, and its output is correct.
\end{theorem}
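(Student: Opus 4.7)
The plan is to split the proof into termination and correctness, leveraging the Jaco-Rubinstein classification of crushing outcomes recalled in Section~\ref{subsec:Crushing}.

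For termination, I would track the tetrahedron count of the working triangulation $\tri_i$. Step~\ref{en-alg-simplify} is a finite simplification that never increases this count; step~\ref{en-alg-search} is a finite (though worst-case exponential) enumeration as described in \cite{burton12-unknot}; and step~\ref{en-alg-crush} strictly reduces it, because by Jaco-Rubinstein the number of tetrahedra that survive the crush equals the number containing no quadrilateral of $E$, and since $E$ is not a vertex link it carries at least one quadrilateral disc. Hence the inner loop halts after at most $n$ iterations, where $n$ is the initial tetrahedron count, and the outer loop has only two passes, so the whole procedure terminates.

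For correctness, suppose first that the algorithm returns ``incompressible''. Then for each $i\in\{1,2\}$ the final triangulation reached by the inner loop admits no connected normal surface of positive Euler characteristic outside the exempt families. Any compressing disc for $S$ in $M_i$ can be normalised to a connected normal disc with $\chi=1$ that is not a vertex link and (for $i=2$) avoids $B_v$, so no compressing disc exists in the final manifold. To pull this conclusion back to $M_i$, I would argue inductively over the invocations of step~\ref{en-alg-crush}: each time we recurse, the selected component has the same boundary genera as $\tri_i$, which by the Jaco-Rubinstein list forces the intervening operations to be sphere-cuts with capping, boundary-sphere cappings, and deletions of components drawn from $\{B^3, S^3, \R P^3, L(3,1), S^2\times S^1\}$. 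Using irreducibility of non-trivial knot complements in $S^3$ and the preservation of the marked boundary $B_i$, none of these can introduce or remove a compressing disc for $B_i$, so $S$ is indeed incompressible in $M$. Conversely, if the algorithm returns ``not incompressible'', it is because some crush yields no component of matching boundary genera; by Jaco-Rubinstein, boundary genus drops only via a disc-cut, so $E$ must have contained a compressing disc for $B_i$ and $S$ is compressible in $M$.

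The main obstacle is the inductive transfer in the incompressible direction: I need to rule out that iterated sphere-cut/cap operations and standard-component deletions cumulatively alter the compressibility status of $B_i$. The plan is to show that each operation permitted by the boundary-preservation rule acts trivially on compressing discs for $B_i$, leaning on irreducibility of the ambient non-trivial knot complement in $S^3$ to exclude non-trivial sphere surgeries.
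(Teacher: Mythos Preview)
Your termination argument and the overall two-part structure (normalise a putative compressing disc; track the Jaco--Rubinstein side-effects of crushing) match the paper's approach. The substantive gap is that you repeatedly invoke irreducibility---to normalise a compressing disc in $M_i$ to a normal one, and to argue that the crushing operations which preserve boundary genera leave the compressibility status of $B_i$ unchanged---while only citing ``irreducibility of the ambient non-trivial knot complement in $S^3$''. Irreducibility of $M$ does not automatically descend to the cut pieces. One can check that $M_1$ is always irreducible, but $M_2$ (which carries both $B_2$ and the torus $B_v$) need not be: it may contain a sphere separating $B_2$ from $B_v$. In that situation your claim ``boundary genera drop only via a disc-cut'' is false for $i=2$, since a sphere-cut can split $B_2$ and $B_v$ into distinct components with no compressing disc in sight; likewise the Jaco--Oertel normalisation step is unjustified for $i=2$.

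The paper closes this with an observation you are missing: if $M_2$ is reducible in this way, then reattaching $M_1$ shows $S$ lies inside a $3$-ball in $M$, so $S$ already compresses on the $M_1$ side. This lets one (a)~reduce the normalisation argument to the genuinely irreducible piece, and (b)~certify that the algorithm's ``not incompressible'' verdict is still correct when the boundary mismatch in $\tri_2'$ comes from such a sphere-cut rather than a disc-cut. A secondary imprecision: boundary-genus preservation does not exclude disc-cuts outright, since a cut along a disc whose boundary is trivial in $\partial M_i$ creates only a new sphere boundary that may subsequently be capped; under irreducibility of $M_i$ such a cut merely splits off a $3$-ball, but establishing that again requires the case analysis above.
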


\begin{proof}
    The algorithm terminates because each time we loop
    back to step~\ref{en-alg-simplify} we have fewer tetrahedra
    than before.  Correctness is more interesting: there are many claims
    in the algorithm statement that require proof.
    Full proofs are given in Appendix~\ref{app-incompressible};
    the key ideas are as follows.
    \begin{itemize}
        \item In step~\ref{en-alg-truncate} we claim that cutting along
        $S$ yields two (disconnected) compact manifolds.
        This follows from the fact that 
        every closed surface embedded in the 3-sphere is separating.

        \item In step~\ref{en-alg-nodisc} we claim that, if the surface
        $E$ cannot be found in $\tri_1$ \emph{and} it cannot be found in
        $\tri_2$, then the original surface $S$ must be incompressible.
        This is because otherwise, by results of Jaco and Oertel
        \cite{jaco84-haken},
        there must be a \emph{normal} compressing disc on one side of $S$.

        \item In step~\ref{en-alg-crush} we make several claims.
        First, the new triangulation $\tri'_i$ has strictly fewer tetrahedra
        because $E$ is not a vertex link.
        Second, we claim that if $\tri'_i$ has a component with the
        same genus boundary (or boundaries) as $\tri_i$ then it represents
        the same manifold $M_i$, and otherwise $S$ is compressible;
        this is because the ``destructive'' side-effects of
        the crushing process reduce the boundary genus by cutting
        along compressing discs for $S$.
    \end{itemize}
    There are additional complications involving irreducibility;
    again see Appendix~\ref{app-incompressible} for details.
\end{proof}


\begin{algorithm}[Testing whether a knot is large or small] \label{alg-large}
    Let $K$ be a non-trivial knot in $S^3$.
    To test whether $K$ is large or small:
    \begin{enumerate}
        \item \label{en-test-ideal}
        Build an ideal triangulation $\tri$ of the complement of $K$ in $S^3$.

        \item \label{en-test-enumerate}
        Enumerate all extremal rays of $Q_0(\tri)$;
        denote these $\mathbf{e}_1,\ldots,\mathbf{e}_k$.
        For each extreme ray $\mathbf{e}_i$,
        let $S_i$ be the unique connected two-sided normal
        surface for which $x(S_i)$ lies on $\mathbf{e}_i$.
        Ignore all surfaces $S_i$ that are spheres.

        \item \label{en-test-essential}
        For each remaining surface $S_i$,
        use algorithm~\ref{alg-incompressible} to test whether $S_i$
        is incompressible in $\tri$.
        If any $S_i$ is incompressible and is not a torus, then terminate
        with the result that $K$ is large.
        If no $S_i$ is incompressible, then terminate with the result that
        $K$ is small.

        \item \label{en-test-special}
        Otherwise the only incompressible surfaces in our list
        are tori.  For each incompressible torus $S_i$, test whether
        $S_i$ is boundary parallel by (i)~cutting $\tri$ open along $S_i$,
        and then (ii)~using the Jaco-Tollefson algorithm
        \cite[Algorithm~9.7]{jaco95-algorithms-decomposition}
        to test whether one of the resulting components is the product
        space $(\textrm{Torus}) \times [0,1]$.  If all incompressible tori are
        found to be boundary parallel then $K$ is small,
        and otherwise $K$ is large.
    \end{enumerate}
\end{algorithm}

\noindent
Regarding the individual steps:
\begin{itemize}
    \item Step~\ref{en-test-ideal} requires us to triangulate the
    complement of $K$.  Hass et~al.\ \cite{hass99-knotnp}
    show how to build a compact triangulation
    (with boundary triangles).  To make this an ideal
    triangulation we cone over the boundary, and retriangulate
    to remove internal (non-ideal) vertices.

    \item Step~\ref{en-test-enumerate} requires us to enumerate all
    extremal rays of $Q_0(\tri)$.  This is an expensive procedure
    (which is unavoidable, since there is a worst-case exponential
    number of extremal rays).
    For this we use the recent state-of-the-art tree traversal method
    \cite{burton10-tree}, which is tailored to the constraints and
    pathologies of normal surface theory and is found to be highly
    effective for larger problems.
    The tree traversal method works in the
    larger cone $Q(\tri)$, but it is a simple matter to insert the
    two additional linear equations corresponding to $\nu_x=0$.

    We also note that it is simple to identify the unique
    closed two-sided normal surface for which $Q(S)$ lies on the
    extremal ray $\mathbf{e}$.  Specifically, $Q(S)$ is either
    the smallest integer vector on $\mathbf{e}$ or, if that vector
    yields a one-sided surface, then its double.

    \item If we do not reach a conclusive result in
    step~\ref{en-test-essential}, then step~\ref{en-test-special} requires
    us to run the Jaco-Tollefson algorithm to test whether any
    incompressible torus is boundary-parallel.
    This algorithm is expensive: it requires us to work in a larger
    normal coordinate system, solve difficult enumeration
    problems, and perform intricate geometric operations.

    However, it is rare that we should reach this situation, and indeed
    for all $2979$ knots that we consider in Section~\ref{s-results},
    this scenario never occurs.  For some knots (e.g., satellite
    knots) it cannot be avoided, but there are additional fast methods
    for avoiding the Jaco-Tollefson algorithm even in these settings,
    which we describe in the full version of this paper.
\end{itemize}

\begin{theorem} \label{thm-large}
    Algorithm~\ref{alg-large} terminates, and its output is correct.
\end{theorem}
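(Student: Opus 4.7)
The plan is to verify termination and correctness separately, drawing on Theorem~\ref{thm:some extremal is closed essential} to guarantee that the finite candidate list generated in step~\ref{en-test-enumerate} must contain a closed essential surface whenever one exists, and on Theorem~\ref{thm-incompressible} plus the Jaco-Tollefson algorithm to correctly dispatch each candidate.

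Termination is immediate: step~\ref{en-test-ideal} is a single triangulation construction; step~\ref{en-test-enumerate} produces finitely many extremal rays (worst-case exponential but always finite); step~\ref{en-test-essential} invokes Algorithm~\ref{alg-incompressible} once per candidate, each call terminating by Theorem~\ref{thm-incompressible}; and step~\ref{en-test-special} relies on the Jaco-Tollefson algorithm, which is known to halt.

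For correctness I would argue the two directions in turn. Suppose first that $K$ is small. The algorithm can only wrongly declare $K$ large by finding either an incompressible non-torus $S_i$ in step~\ref{en-test-essential} or an incompressible, non-boundary-parallel torus $S_i$ in step~\ref{en-test-special}; in either case $S_i$ would be a closed essential surface in $M$, contradicting smallness. Conversely, suppose $K$ is large. Theorem~\ref{thm:some extremal is closed essential} supplies a closed essential normal surface $F$ whose $Q$--coordinate lies on some extremal ray $\mathbf{e}_i$. Since an essential surface is by definition connected and two-sided, $F$ coincides up to normal isotopy with the distinguished candidate $S_i$ associated to $\mathbf{e}_i$ (uniqueness here is the same uniqueness noted in the discussion of step~\ref{en-test-enumerate}). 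Because $K$ is non-trivial, $S^3\setminus K$ is irreducible, so no essential surface is a sphere and $S_i$ survives the sphere-discarding step. Algorithm~\ref{alg-incompressible} then certifies $S_i$ as incompressible, so either step~\ref{en-test-essential} concludes immediately (if $S_i$ is not a torus) or step~\ref{en-test-special} does so after Jaco-Tollefson verifies that $S_i$ fails to be boundary parallel, which is forced by essentiality.

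The main obstacle is the bookkeeping needed to justify two tacit identifications: that the $F$ produced by Theorem~\ref{thm:some extremal is closed essential} really is the candidate $S_i$ the algorithm inspects, and that discarding spheres in step~\ref{en-test-enumerate} is legitimate. Both are handled as above, the first by the connected two-sided uniqueness on an extremal ray, and the second by the irreducibility of $S^3\setminus K$ for non-trivial $K$; I anticipate no substantive difficulty beyond these checks.
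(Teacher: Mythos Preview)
Your proof is correct and follows essentially the same approach as the paper's own proof in Appendix~\ref{app-large}: both reduce termination to the finiteness of each step and reduce correctness to Theorem~\ref{thm:some extremal is closed essential} (an essential $Q_0$--vertex surface exists whenever $K$ is large) together with Theorem~\ref{thm-incompressible} and the Jaco--Tollefson test. The paper's version is terser, but your explicit two-direction argument and the bookkeeping identifying $F$ with the canonical $S_i$ on its extremal ray are exactly the checks the paper leaves implicit.
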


\begin{proof}
    It is clear that the algorithm terminates (there is no
    looping), and the correctness follows immediately from
    Theorems~\ref{thm:some extremal is closed essential} and
    \ref{thm-incompressible}.
    For details see Appendix~\ref{app-large}.
\end{proof}


\section{Computational results} \label{s-results}

Here we describe the results of running the algorithms of
Section~\ref{sec:Detecting compressing discs} over
significant collections of input knots.
These computational results emphasise that the new largeness testing
algorithm is both feasible to implement, and fast enough to be practical
for non-trivial inputs---both features that distinguish it from many of
its peers in algorithmic low-dimensional topology.

The algorithms were implemented in
C\nolinebreak\hspace{-.05em}\raisebox{.4ex}{\tiny\bf +}%
 \nolinebreak\hspace{-.10em}\raisebox{.4ex}{\tiny\bf +}
using the software package {\regina} \cite{burton04-regina,regina}.
The code is available from
\url{http://www.maths.uq.edu.au/~bab/code/}, and works with the
forthcoming {\regina} version~4.94.
Supporting data for the computations described here, including
triangulations of the knot complements and the corresponding
lists of admissible extreme rays of $Q_0(\tri)$,
can be downloaded from this same location.

All running times reported here are measured on a single
2.93~GHz Intel Core~i7 CPU.

\subsection{The census of knots up to 12 crossings}

Our first data set is the census of all $2977$ non-trivial prime knots
that can be represented with $\leq 12$ crossings.
Ideal triangulations of the knot complements were extracted from
the {\snappy} census tables \cite{snappy}, and then further
simplified where possible using {\regina}'s greedy heuristics
\cite{burton12-regina} to yield a final set of input triangulations
ranging from 2--26 tetrahedra in size.

The algorithms ran successfully over all $2977$ triangulations,
yielding the following results:

\begin{theorem}
    Of the $2977$ distinct non-trivial prime knots with up to $12$ crossings,
    $1019$ are large and $1958$ are small.
\end{theorem}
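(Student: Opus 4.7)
The plan is to apply Algorithm~\ref{alg-large} to each of the $2977$ knots in the census and simply tabulate the outputs. Since Theorem~\ref{thm-large} guarantees that the algorithm terminates with a correct verdict of \emph{large} or \emph{small}, the mathematical content of the theorem reduces to a verified computation. The key preparatory step is to produce, for every knot $K$ in the census, an ideal triangulation of $S^3 \setminus K$ that is small enough for the enumeration in step~\ref{en-test-enumerate} to be tractable. I would extract the triangulations from the \snappy\ census tables and then run \regina's polynomial-time greedy simplification heuristics \cite{burton12-regina} as a pre-processing pass, aiming for inputs in the range of a few up to roughly two dozen tetrahedra.

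Next, for each input triangulation $\tri$ I would execute step~\ref{en-test-enumerate} using the tree traversal enumeration of \cite{burton10-tree}, augmented with the two linear boundary equations $\nu_x(\alpha)=0$ and $\nu_x(\beta)=0$ for a chosen basis of $H_1(T;\R)$, to obtain the complete list of extremal rays of $Q_0(\tri)$. From each ray I would extract the unique connected two-sided normal surface (the primitive integer point, doubled if one-sided) and discard spheres. Then step~\ref{en-test-essential} invokes Algorithm~\ref{alg-incompressible} on each remaining candidate; a single non-toroidal incompressible surface certifies largeness, while exhausting the list without finding one certifies smallness (modulo the boundary-parallel torus case of step~\ref{en-test-special}).

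The main obstacle I expect is sheer computational cost: the extremal ray enumeration is worst-case exponential in the number of tetrahedra, and Algorithm~\ref{alg-incompressible} is itself worst-case exponential in its inner search step~\ref{en-alg-search}, so the largest triangulations (around $20$--$26$ tetrahedra, including the complements coming from the higher-crossing $12$-crossing knots) could in principle be out of reach. In practice I would rely on the branch-and-bound techniques of \cite{burton12-unknot} inside Algorithm~\ref{alg-incompressible}, and on the crushing step~\ref{en-alg-crush} to keep the re-triangulations from blowing up between iterations of the simplify/search/crush loop. A secondary concern is the possibility of landing in step~\ref{en-test-special}, which would require invoking the expensive Jaco-Tollefson product-recognition algorithm; however, for prime knots in $S^3$ with few crossings this scenario is rare and empirically does not arise over the census.

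Finally, I would cross-check the outputs against any previously known data (for instance, surface information for knots of low crossing number in \knotinfo\ and the results of Lackenby and others on alternating and two-bridge knots) to validate the implementation, and then report the totals: $1019$ large and $1958$ small, summing to $2977$. The correctness of the theorem then follows from Theorem~\ref{thm-large} applied knot-by-knot, with the computer playing the role of the executor of a provably correct decision procedure rather than contributing any new mathematical step beyond what is already established in Section~\ref{sec:Detecting compressing discs}.
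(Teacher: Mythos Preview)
Your proposal is correct and mirrors the paper's own approach essentially step for step: extract ideal triangulations from the \snappy\ census, simplify with \regina, enumerate the extremal rays of $Q_0(\tri)$ via the tree traversal method with the boundary equations adjoined, and run Algorithm~\ref{alg-incompressible} on each candidate, relying on Theorem~\ref{thm-large} for correctness. The paper likewise reports that step~\ref{en-test-special} never arose over the census, and your added cross-check against known data is a sensible validation step not explicitly mentioned there.
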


A full list of all $1019$ large knots
can be found in Appendix~\ref{app-knots}.
Regarding performance:
\begin{itemize}
    \item
    The enumeration of extremal rays of $Q_0(\tri)$ was extremely fast,
    with a maximum time of $47$~seconds, and a median time of just
    $0.08$~seconds.  This is a clear illustration of the benefits we
    obtain from Theorem~\ref{thm:some extremal is closed essential},
    which allows us to work in the restricted cone $Q_0(\tri)$ instead
    of the (typically much larger) cone $Q(\tri)$.

    The number of extremal rays of $Q_0(\tri)$ ranged from
    $0$ (for the figure~eight knot complement) up to $509$
    (for one of the 26-tetrahedron triangulations), with a median of $33$.

    \item
    Testing whether each candidate surface was essential was also
    extremely fast in most (but not all) cases.
    For each knot complement, we can sum the times required to process
    all candidate surfaces: the median time over all $2977$ knots was
    $\sim3.6$ seconds, and all but three of the knots had a processing time
    of under $12$~minutes.

    The remaining three knots, however, were significantly slower to
    process.  One required $\sim 3.9$~hours, one required $\sim 12.2$~hours,
    and one (the knot $12\mathrm{a}_{0779}$) was still running after $6$~days.
    However, in a striking illustration of how the algorithms depend
    strongly upon the underlying triangulations, when the code was
    run with a different random seed (which affected the simplification
    heuristics, and hence the triangulations obtained after slicing along
    surfaces), this worst-case knot $12\mathrm{a}_{0779}$ was fully processed
    in under $4$~minutes.
\end{itemize}

\subsection{The dodecahedral knots}

We now turn our attention to the dodecahedral knots
$D_f$ and $D_s$ as described by Aitchison and Rubinstein
\cite{aitchison92-cubings}.  These two knots exhibit remarkable
properties \cite{aitchison97-geodesic,neumann92-arithmetic},
and each can be represented with 20 crossings \cite{aitchison97-geodesic}.
Running our algorithms over them yields the following results:

\begin{theorem}
    The two dodecahedral knots $D_f$ and $D_s$ are both large.
    In particular, their complements contain
    closed essential surfaces of genus $3$.
\end{theorem}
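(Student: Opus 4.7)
The plan is to apply Algorithm~\ref{alg-large} directly to each of $D_f$ and $D_s$, and to exhibit in each case a single $Q_0$-vertex surface that Algorithm~\ref{alg-incompressible} certifies as incompressible and whose genus is computed to be $3$. The theorem then follows immediately from Theorem~\ref{thm:some extremal is closed essential} together with Theorem~\ref{thm-large}; no new mathematics is required beyond what has already been established in the paper.

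First I would obtain small ideal triangulations of $S^3\setminus D_f$ and $S^3\setminus D_s$. Aitchison and Rubinstein~\cite{aitchison92-cubings} realise each complement as a face pairing of two regular ideal dodecahedra, which can be subdivided into ideal tetrahedra and then fed into \regina's greedy simplification heuristics~\cite{burton12-regina}; equivalently one may begin from the \snappy\ triangulations of these knots and simplify. With a suitably small $\tri$ in hand, step~\ref{en-test-enumerate} of Algorithm~\ref{alg-large} enumerates all extremal rays of $Q_0(\tri)$ via the tree-traversal method of~\cite{burton10-tree}, imposing the two boundary equations $\nu_x(\alpha)=\nu_x(\beta)=0$ on top of the $Q$-matching system. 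For each non-sphere vertex surface $S_i$ I would run Algorithm~\ref{alg-incompressible}. As soon as some $S_i$ with $\chi(S_i)<0$ is certified incompressible, its genus is read off as $g = 1-\chi(S_i)/2$ from the quadrilateral and triangle counts, and we verify that this equals $3$. Because $\chi(S_i)<0$ the witness is neither a sphere nor a torus, so step~\ref{en-test-special} of Algorithm~\ref{alg-large} (the Jaco-Tollefson boundary-parallel test) is not triggered.

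The hard part will be the sheer size of the input. At $20$ crossings the ideal triangulations will be substantially larger than anything in the $12$-crossing census, and both the enumeration of extremal rays of $Q_0(\tri)$ and the Euler-characteristic maximisation inside Algorithm~\ref{alg-incompressible} are worst-case exponential in the tetrahedron count. The experience reported in Section~\ref{s-results}, where the knot $12\mathrm{a}_{0779}$ was effectively infeasible under one random seed but completed in under four minutes under another, shows that runtime is extremely sensitive to the particular starting triangulation and to the simplification choices made after each crushing step. A practically essential part of the plan is therefore to iterate the pipeline across several simplified triangulations and several random seeds for the branch-and-bound search, until an incompressible candidate $S_i$ appears before the exponential blow-up dominates.

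Finally, once a genus-$3$ witness is found for each of $D_f$ and $D_s$, one need not re-run the full enumeration to double-check the result: it suffices to re-invoke Algorithm~\ref{alg-incompressible} on just that single surface, which gives a short, reproducible certificate of essentiality. That certificate, together with Theorem~\ref{thm-large}, proves both the largeness statement and the genus claim.
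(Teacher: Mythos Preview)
Your proposal is correct and mirrors the paper's own approach: the theorem is a computational result obtained by running Algorithm~\ref{alg-large} on ideal triangulations of the two complements (the paper uses $46$- and $47$-tetrahedron triangulations supplied by Craig Hodgson rather than subdividing the dodecahedra directly), enumerating the admissible extremal rays of $Q_0(\tri)$, and certifying a genus-$3$ $Q_0$-vertex surface as incompressible via Algorithm~\ref{alg-incompressible}. The paper likewise notes the sensitivity of the incompressibility test for $D_f$ to the random seed in the simplification heuristics, exactly as you anticipate.
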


We work with ideal triangulations of the knots $D_f$ and $D_s$ with
$46$ and $47$ tetrahedra respectively, which were kindly provided by Craig Hodgson.  
These are significantly larger
than the knots from the 12-crossing census; indeed, triangulations of
this size are typically considered well outside the range of feasibility
for normal normal surface theory.  Happily our algorithms now prove otherwise:
\begin{itemize}
    \item This time the enumeration of extremal rays of $Q_0(\tri)$
    was the bottleneck: for $D_f$ and $D_s$ this enumeration took
    roughly $2.8$ and $2.4$ days respectively.
    The number of admissible extremal rays was $72272$ and $73609$
    respectively.

    \item To test whether candidate surfaces were essential,
    the knot $D_s$ was completely processed in under $3$~minutes;
    in contrast, $D_f$ required roughly $4.4$~hours.
    Once again, we see that this part of the algorithm depends heavily
    upon the underlying triangulation: when running with a different random
    seed, $D_f$ was likewise processed in just a few minutes.
\end{itemize}

%
%

\bibliographystyle{amsplain}
\bibliography{pure}

\bigskip

%
%
\newpage

\appendix

\section*{Appendix: Additional proofs, examples and data}

This paper is concerned with finding special surfaces in 3--manifolds. To keep the paper short, we mainly considered the case of finding a closed essential surface in a knot exterior in $S^3,$ and have introduced the bare minimum of the new theory and algorithms required for this application. The most natural generalisation of our results and algorithms is to the setting of compact 3--manifolds with boundary consisting of a finite union of tori. These manifolds are related to the Geometrisation Theorem of Thurston and Perelman, as they are key building blocks in the so-called JSJ decomposition of a closed, irreducible, orientable 3--manifold into geometric pieces.

\section{General definitions}

In this first appendix, we give more technical definitions of knot and link manifolds, essential surfaces, triangulations and normal surfaces.

\subsection{Knots and 3--manifolds}

Suppose $M$ is an orientable 3--manifold (possibly with boundary). There
are two key properties that are often required of $M$ in geometric
topology. The first is that every embedded 2--sphere in $M$ bounds a ball to at
least one side; in this case $M$ is \emph{irreducible}. The second is
that for each boundary component $B$ of $M,$ the inclusion homomorphism
$\pi_1(B)\to \pi_1(M)$ is injective; in this case $M$ is
$\partial$--irreducible. (A geometric interpretation of this algebraic
property is given in the next section.) For instance, the 3--sphere
$S^3$ is irreducible, and if $K$ is an embedding of $S^1$ in $S^3,$
called a \emph{knot}, then the \emph{knot exterior} $M= S^3\setminus
N(K),$ where $N(K)$ is a small open neighbourhood of $K,$ is also
irreducible, and it is $\partial$--irreducible if and only if $K$ is
non-trivial.
The \emph{knot complement} $S^3\setminus K$ is homeomorphic
to the interior of the knot exterior $S^3\setminus N(K),$ and it is
sometimes useful to switch between the complement (a non-compact
manifold) and the exterior (a manifold-with-boundary).

\subsection{Surfaces in 3-manifolds}
\label{subsec:essential}

The following definition of an essential surface, along with an extensive discussion of their properties, can be found in Shalen \cite{shalen02-representations}, \S1.5.

\begin{defn}[Essential surface]
A properly embedded surface $S$ in the compact, irreducible, orientable 3--manifold $M$ is \emph{essential} if it has the following properties:
\begin{enumerate}
\item $S$ is bicollared;
\item the inclusion homomorphism $\pi_1(S_i)\to \pi_1(M)$ is injective for each component $S_i$ of $S$;
\item no component of $S$ is a 2--sphere;
\item no component of $S$ is boundary parallel; and
\item $S$ is non-empty.
\end{enumerate}
\end{defn}

Of interest to this paper is the following geometric interpretation of the second property, see Shalen \cite{shalen02-representations} for more details. A \emph{compression disc} for the surface $S$ is a disc $D\subset M$ such that $D \cap S = \partial D$ and $\partial D$ is homotopically non-trivial in $S$ (i.e. does not bound a disc on $S$). In particular, if $S$ has a compression disc, then $\pi_1(S_i)\to \pi_1(M)$ is not injective for some component of $S.$ It follows from classical work of Papakyriakopoulos that the converse is also true. Detecting compression discs is the topic of Section~\ref{sec:Detecting compressing discs}.

If the surface $S$ has non-empty boundary and is properly embedded in a 3--manifold with boundary, there is an additional requirement that one has in order for $S$ to be topologically significant; namely that it be \emph{$\partial$--incompressible}. In the case of interest for this paper, where all boundary components of $M$ are tori, it turns out that incompressible implies $\partial$--incompressible. This is one reason why our algorithms most naturally generalise to the class of link complements.

\subsection{Triangulations}
\label{subsec:Triangulations}

The notation of \cite{jaco03-0-efficiency} and \cite{tillmann08-finite} will be used in this paper. A \emph{triangulation}, $\tri,$ of a compact 3--manifold $M$ consists of a union of pairwise disjoint 3--simplices, $\widetilde{\Delta} = \{ \sigma_1, \ldots, \sigma_t\},$ a set of face pairings, $\Phi,$ and a natural quotient map $p\co \widetilde{\Delta} \to \widetilde{\Delta} / \Phi = M.$ Since the quotient map is injective on the interior of each 3--simplex, we will refer to the image of a 3--simplex in $M$ as a \emph{tetrahedron} and to \emph{its} faces, edges and vertices with respect to the pre-image. Similarly for images of 2--, 1-- and 0--simplices, which will be referred to as \emph{faces}, \emph{edges} and \emph{vertices} in $M$ respectively. 
If an edge is contained in $\partial M,$ then it is termed a \emph{boundary edge}; otherwise it is an \emph{interior edge}.

If $M$ is the interior of a compact manifold with non-empty boundary, an \emph{ideal triangulation}, $\tri,$ of $M$ consists of a union of pairwise disjoint 3--simplices, $\widetilde{\Delta},$ a set of face pairings, $\Phi,$ and a natural quotient map $p\co \widetilde{\Delta} \to \widetilde{\Delta} / \Phi = P,$ such that $M = P \setminus P^{(0)}$ is the complement of the 0--skeleton. The quotient space $P$ is usually called a \emph{pseudo-manifold}.

For brevity, we will refer to a 3--manifold $M$ imbued with a (possibly ideal) triangulation $\tri=(\widetilde{\Delta}, \Phi, p)$ as a
\emph{triangulated 3--manifold}. Throughout, we will assume that $M$ is \emph{oriented}, that all tetrahedra in $M$ are oriented coherently and the tetrahedra in $\widetilde{\Delta}$ are given the induced orientation.

\subsection{Normal surfaces}

A \emph{normal corner} is an interior point of a 1--simplex. A \emph{normal arc} is a properly embedded straight line segment on a 2--simplex with boundary consisting of normal corners. A \emph{normal disc} is a properly embedded disc in a 3--simplex whose boundary consists of normal arcs no two of which are contained on the same face of the 3--simplex; moreover, the normal disc is the cone over its boundary with cone point the barycentre of its normal corners. It follows that the boundary of a normal disc consists of either three or four normal arcs, and it is accordingly called a \emph{normal triangle} or a \emph{normal quadrilateral}. Moreover, a normal disc is uniquely determined by its intersection with the 1--skeleton.

A \emph{normal isotopy} is an isotopy of $M$ that leaves all simplices invariant. Up to normal isotopy, there are 7 types of normal discs in each tetrahedron. We denote $\square$ the set of all isotopy classes of normal quadrilaterals in $\widetilde{\Delta},$ and identify this with the set of all isotopy classes of normal quadrilaterals in $M$ via $p.$ Following Haken, we will connect topology to linear programming via certain functions $x\co \square \to \R^{3t}.$

A \emph{normal surface} $S$ in a triangulated 3--manifold is a properly embedded surface which intersects each 3--simplex in a union of pairwise disjoint normal discs; such a surface is often termed a \emph{spun-normal} surface if one of its connected components contains infinitely many normal discs. 

\subsection{Standard coordinates}

In classical normal surface theory one only considers normal surfaces where each tetrahedron contains at most finitely many disjoint normal disks. The number of parallel copies each type of normal disk specifies the normal surface, and in this way normal surfaces may be studied via $7t$-dimensional vectors with positive integer coordinates, where $t$ is the number of tetrahedra in the triangulation under consideration. These surfaces can be described by a set of linear equations arising from the fact that the normal discs in one tetrahedron have to match up with normal discs in each adjacent tetrahedron across their common face. If $\Delta$ denotes the set of all normal isotopy classes of normal triangles, then the natural coordinate projection $\R^{\Delta \cup \square} \to \R^\square$ takes the standard coordinate of a normal surface to its $Q$--coordinate.

\begin{figure}[h!]
\centering
\includegraphics[width=0.15\textwidth]{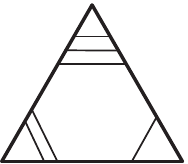}
\caption{The intersection of a normal surface with a typical face of
the 2-skeleton.} \label{3arcst}
\end{figure}

Consider a face of the 2-skeleton of
the triangulation of $M$. Any normal surface in $M$ must intersect
this triangle in \emph{normal arcs}, that is arcs which start and
end on different edges.
There are three types of normal arc in any face of the 2-skeleton,
as shown in Figure \ref{3arcst}. Each of these represents an edge of
a normal disc on each side. These normal discs must match up, as
shown in Figure \ref{matchup}. Note that on each side of a triangle
of the 2-skeleton, there are only two possible types of normal disc,
one triangle and one quadrilateral, which can give rise to each type of
normal arc.

\begin{figure}[h!]
\centering
\includegraphics[width=0.4\textwidth]{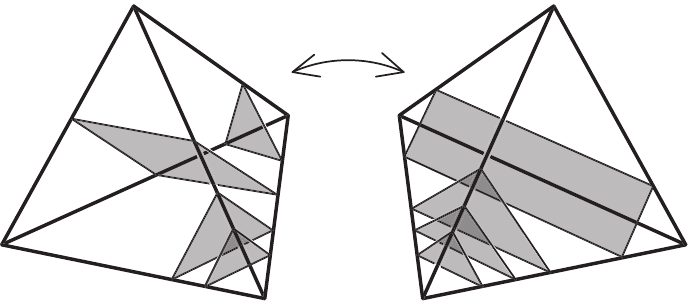}
\caption{The normal discs must match up on the common face of two
adjacent tetrahedra.} \label{matchup}
\end{figure}

Let $t_1,$ $q_1$ be the normal triangle and quadrilateral types in one tetrahedron, sharing the same normal arc type with the normal triangle and quadrilateral types $t_2$ and $q_2$ in an adjacent tetrahedron. We then obtain the linear equation
$$
t_1 + q_1 = t_2 + q_2,
$$
with three such equations for each internal face of the triangulation. The space of all non-negative solutions to these equations in $\R^{\Delta \cup \square}$ is the usual normal surface cone.

%

\section{Example: The figure-eight knot}
\label{app:fig8}

\begin{figure}[h]
  \begin{center}
      \subfigure[The figure 8 knot spans a Klein bottle]{
      \includegraphics[scale=0.6]{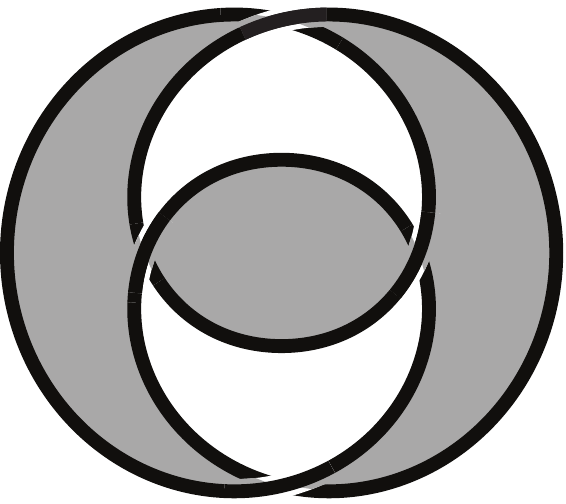}}
      \qquad\qquad
    \subfigure[Thurston's ideal triangulation of the complement]{
      \includegraphics[scale=0.9]{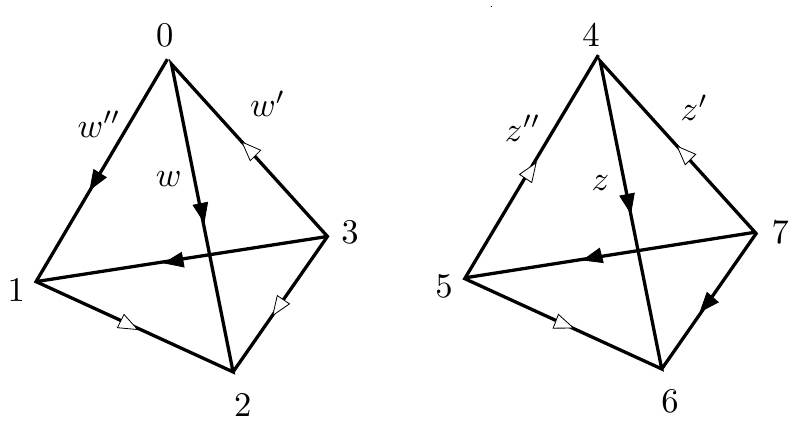}}
  \end{center}
 \caption{The figure eight knot}
    \label{fig:fig8_tets}
\end{figure}

Let $M$ denote the complement of the figure eight knot, which is shown in Figure \ref{fig:fig8_tets}(a). An oriented, ideal triangulation of $M$ is encoded in Figure \ref{fig:fig8_tets}(b). Since $M$ is oriented, we may compute the $Q$--matching equations from the figure. To simplify notation, we use the dual labelling scheme of \cite{tillmann08-finite}, and denote the quadrilateral types dual to the edges labelled $w^{(k)}$ and $z^{(k)}$ by $p^{(k)}$ and $q^{(k)}$ respectively. The $Q$--matching equations for the two edges are equivalent, and one has:
\begin{equation*}
0 = p + p' - 2 p''+q + q' - 2 q''.
\end{equation*}
This implies that the cone $Q(\tri )$ is five--dimensional. 
A direct calculation reveals that $Q (\tri )$ has four admissible extremal  rays; all have minimal representative a once--punctured Klein bottle; such a Klein bottle is shown in Figure \ref{fig:fig8_tets}(a). 
Their normal $Q$--coordinates and boundary slopes are listed in Table \ref{tab:surfaces fig8}. This calculation in particular shows that no spun-normal surface is a Seifert surface for the knot. 

\begin{table}[h]
\begin{center}
\begin{tabular}{ c | r | r | r}
solution & $\nu (\mu )$  & $\nu (\lambda )$ & slope \\
\hline
(2,0,0,0,0,1) & 1 &4  &        --4  \\
(0,2,0,0,0,1) & --1 & 4  &       4   \\
(0,0,1,2,0,0) & --1 & --4  & --4 \\
(0,0,1,0,2,0) &  1&  --4  &     4 \\
\end{tabular}
\end{center}
\caption{Normal surface in the figure eight knot complement}
\label{tab:surfaces fig8}
\end{table}

The induced triangulation of a vertex linking torus, $T,$ is shown in Figure \ref{fig:torus triang fig8}. 
The dual labelling allows us to read off the linear functional $\nu(\gamma)$ for the path $\gamma$ that is transverse to the 1--skeleton on $T.$ If the path $\gamma$ on $T$ exits a triangle across the edge opposite the vertex with label $w^{(k)},$ then in $\nu(\gamma),$ we record $-p^{(k)},$ and if it enters a triangle across the edge opposite the vertex labelled $w^{(k)},$ then in $\nu(\gamma),$ we record $+p^{(k)}.$ Similarly for the labels $z^{(k)}.$

\begin{figure}[h!]
\begin{center}
  \includegraphics{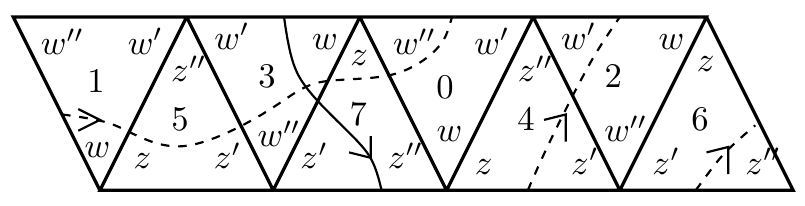}
\end{center}
    \caption{The induced triangulation of the vertex linking torus, where the sides of the rectangle are identified by translations parallel to its sides and triangle $i$ is dual to vertex $i$ in Figure~\ref{fig:fig8_tets}. The shown elementary curves are the standard meridian $\mu$ (solid) and longitude $\lambda$ (dashed).}
    \label{fig:torus triang fig8}
\end{figure}

In this fashion (and using the $Q$--matching equation to simplify the expressions), we determine the linear functionals associated to the standard peripheral curves:
\begin{align*}
\nu (\lambda ) &=  2 p +2 p' - 4 p'',\\
\nu (\mu ) &= -p'+p''-q+q''.
\end{align*}
One can now verify that $Q_0(\tri )=\{0\}.$ Whence any closed embedded normal surface is a union of vertex linking tori. 

\section{Proof of Theorem~\ref{thm:some extremal is closed essential}}
\label{app:Jaco-Oertel}

In this appendix, we prove Theorem~\ref{thm:some extremal is closed essential} in the more general setting of a 3--manifold with boundary consisting of a union of tori. The definition of the boundary map and of $Q_0(\tri)$ is generalised to the case of multiple boundary components as follows. For each vertex linking torus $T_k,$ we obtain a well-defined homomorphism $\nu_{k,x}\co H_1(T_k; \R) \to \R,$ and we define $\nu_x = \oplus_k\; \nu_{k,x},$ where the sum is taken over all ideal vertices. The surface in Theorem~\ref{thm:admissible integer solution gives normal} is closed if and only if $\nu_x =0$ (see \cite{tillmann08-finite}, Proposition 3.3). We then define $Q_0(\tri) = Q(\tri) \cap \{ x \mid \nu_x=0\},$  and call a 2--sided, connected normal, surface $F$ with $Q(F)$ on an extremal ray of $Q_0(\tri)$ a \emph{$Q_0$--vertex surface.}

The weight of the normal surface $F$ is the cardinality of its intersection with the 1--skeleton,  $\wt(F) = |F \cap M^{(1)}|.$ If $F$ is closed, then its weight is finite.

\begin{figure}[h]
\begin{center}
  \subfigure[]{
      \includegraphics[height=5cm]{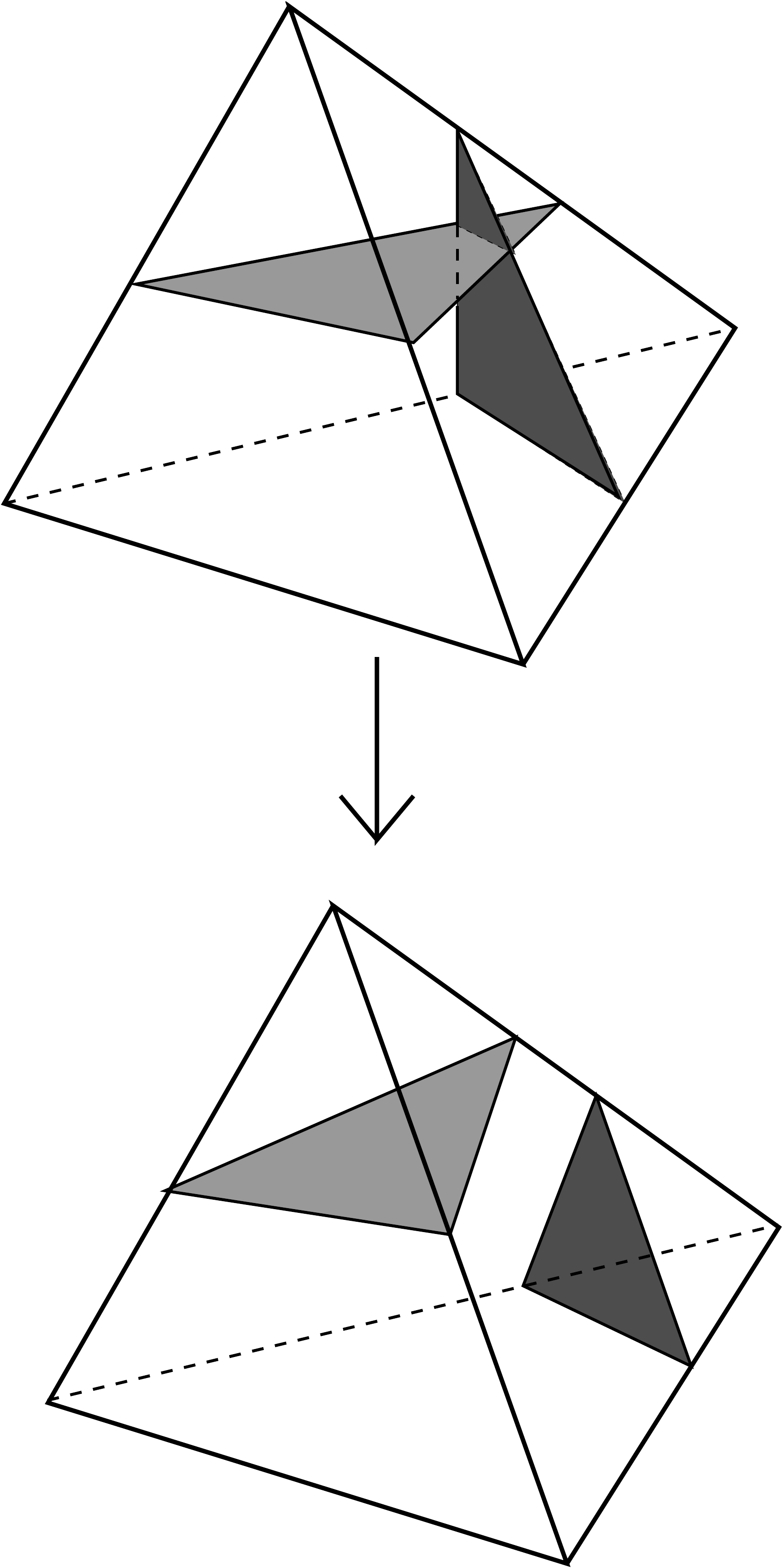}
    } 
    \quad
\subfigure[]{
      \includegraphics[height=5cm]{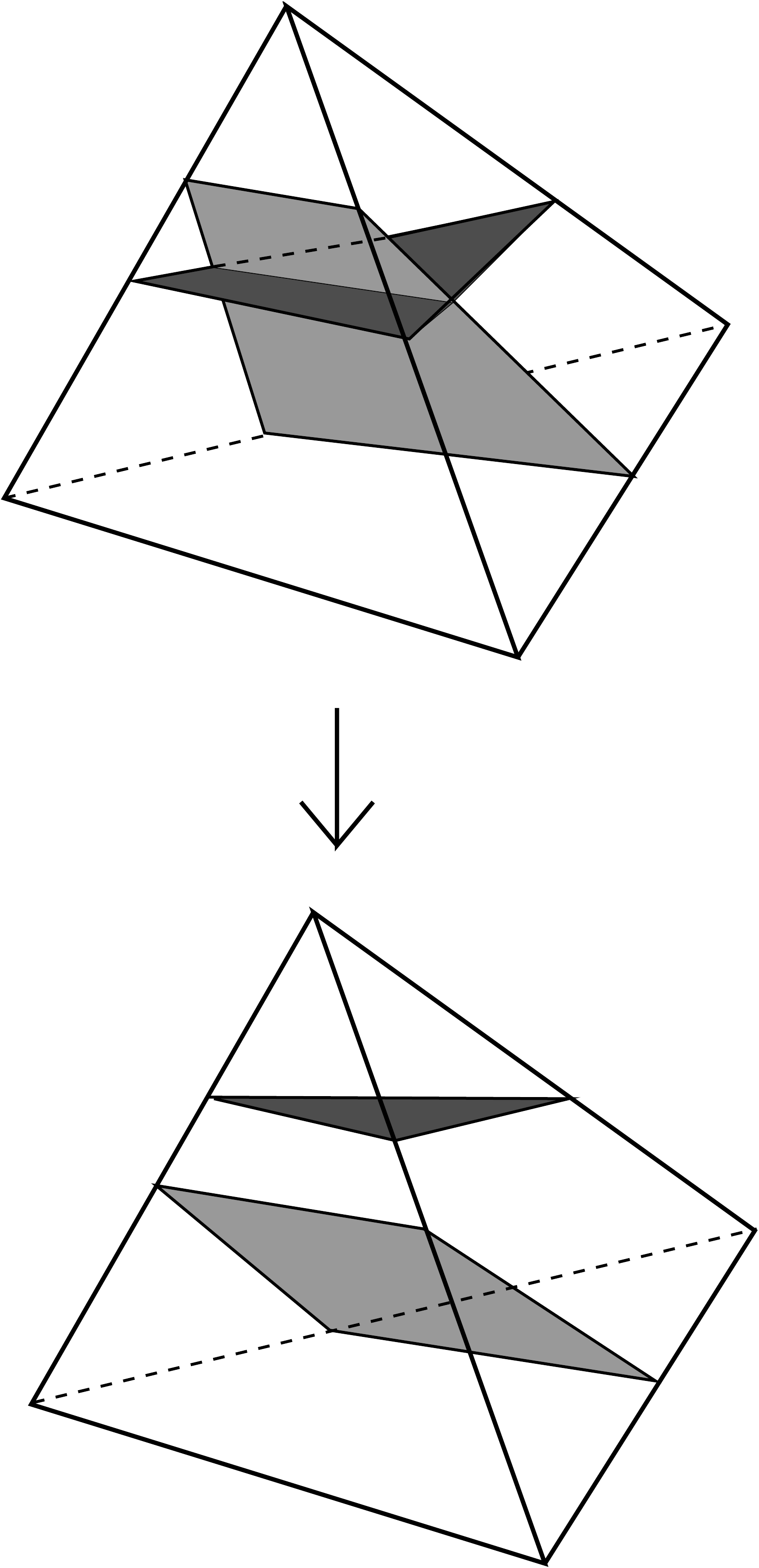}
    } 
    \quad
\subfigure[]{
      \includegraphics[height=5cm]{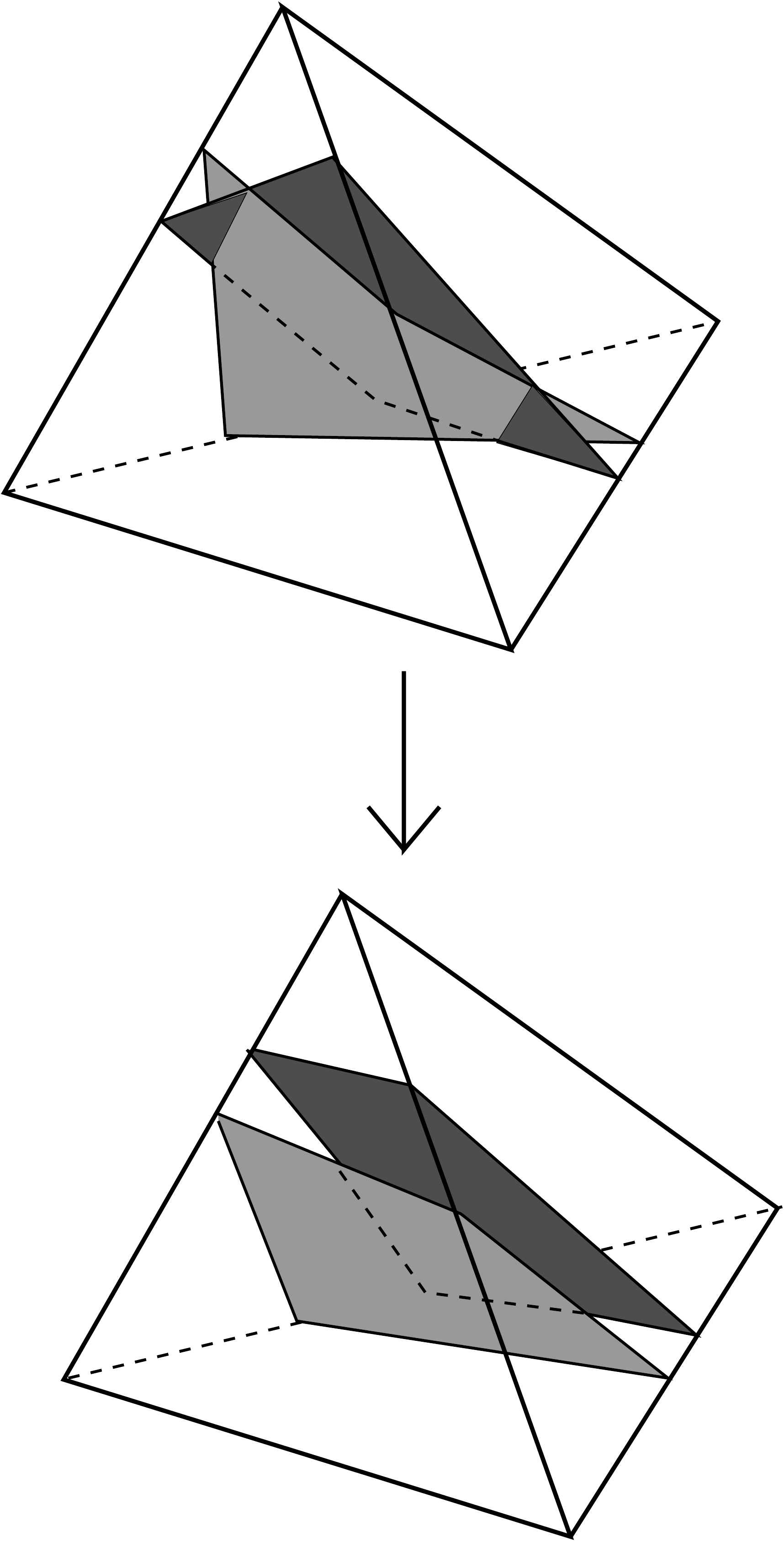}
    } 
    \quad
\subfigure[]{
      \includegraphics[height=5cm]{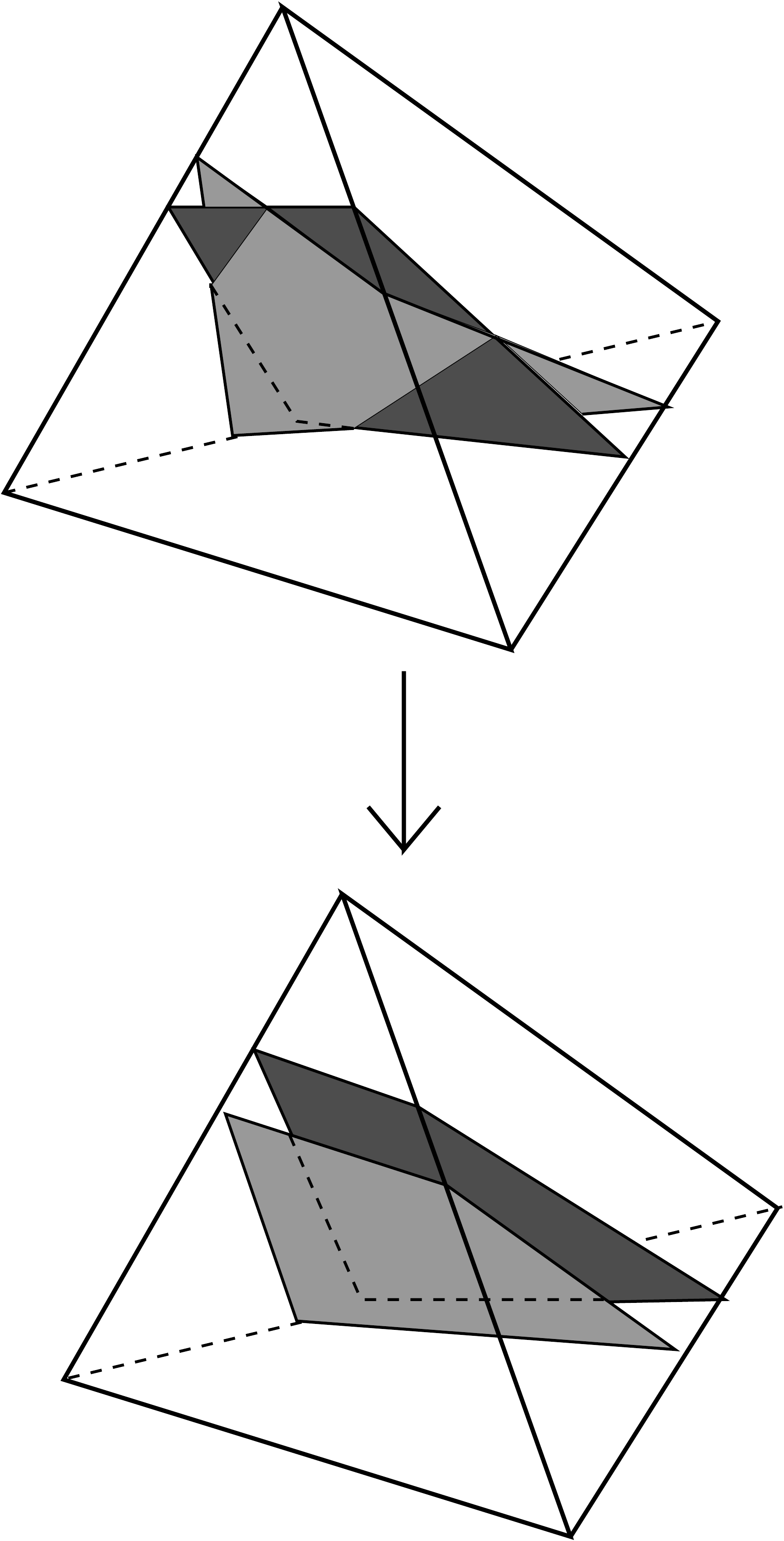}
    } 
\end{center}
    \caption{Regular exchange of normal discs}
     \label{fig:regular exchange of discs}
\end{figure}

Two normal surfaces are \emph{compatible} if they do not meet a tetrahedron in quadrilateral discs of different types. In this case, the sum of their normal coordinates is the coordinate of a normal surface.
Suppose $F_1$ and $F_2$ are normal surfaces that are compatible, not vertex linking surfaces, and in general position. 
Then $x(F_1)+x(F_2)$ is an admissible solution to the $Q$--matching equations, and hence represented by a unique normal surface without vertex linking components; denote this surface $F.$ The surface $F$ is obtained geometrically as follows. At each component of $F_1 \cap F_2,$ there is a natural choice of \emph{regular switch} between normal discs, such that the result is again a normal surface. See Figure~\ref{fig:regular exchange of discs} for some possible configurations involving only two discs. Denote $N(F_1 \cap F_2)$ a small, open, tubular neighbourhood of $F_1 \cap F_2.$
The connected components of $(F_1 \cup F_2) \setminus N(F_1 \cap F_2)$ are termed \emph{patches}.

Deleting any vertex linking tori that arise gives the surface $F,$ and we write $F + \Sigma = F_1 + F_2,$ where $\Sigma$ is (possibly empty or a possibly infinite) union of vertex linking tori. This is called the \emph{Haken sum} of $F_1$ and $F_2.$
Both weight and Euler characteristic are additive under this sum. So we have
\begin{align*}
\wt(F_1) + \wt(F_2) &=\wt(F) + \wt(\Sigma),\\
\chi(F_1) + \chi(F_2) &=\chi(F),
\end{align*}
since $\chi(\Sigma)=0.$

The sum $F+\Sigma=F_1 + F_2$ is said to be in \emph{reduced form} if there is no Haken sum $F+\Sigma'=F'_1 + F'_2,$ where $F'_i$ is isotopic to $F_i$ in $M,$
 $F'_1 \cap F'_2$ has fewer components than $F_1 \cap F_2$ and $\Sigma'$ is a union of vertex linking tori.
  It should
be noted that in these two sums, the embedding of $F$ in $M$ is the same
(these are not equalities up to isotopy), and that any sum can be
changed to a sum in reduced form.

\begin{reptheorem}{thm:some extremal is closed essential}
Suppose $M$ is the interior of a compact, irreducible and $\partial$--irreducible manifold with boundary consisting of a union of tori, and let $\tri$ be an ideal triangulation of $M.$ If $M$ contains a closed, essential surface $S,$ then there is a normal, closed essential surface $F$ with the property that $x(F)$ lies on an extremal ray of $Q_0(\tri).$ Moreover, if $\chi(S)<0,$ then there is such $F$ with $\chi(F)<0.$ 
\end{reptheorem}

\begin{proof}
Suppose $M$ contains a closed, essential surface. It follows from a standard argument (see, for instance, \cite{jaco84-haken} and \cite{jaco03-0-efficiency}) that there is a closed, essential, normal surface $S$ in $M.$ It remains to show that $S$ may be chosen such that $S$ is a $Q_0$--vertex surface. Replace $S$ by a normal surface that has least weight amongst all normal surfaces isotopic (but not necessarily normally isotopic) to $S.$ Denote this least weight surface $S$ again. 

Suppose $S$ is not a $Q_0$--vertex surface. Then $$n x(S) = \sum n_i x(V_i),$$ where $n, n_i \in \mathbb{N}$ and either $V_i$ or $2V_i$ is a $Q_0$--vertex surface for each $i.$ The two cases arise from the fact that we require a $Q_0$--vertex surface to be 2--sided and connected: If $V$ corresponds to the first integer lattice point on an admissible extremal ray of $Q_0(\tri)$ and $V$ is 1--sided, then the corresponding $Q_0$--vertex surface is $2V,$ obtained by taking the boundary of a regular neighbourhood of $V.$

We denote $nS$ the normal surface obtained by taking $n$ parallel copies of $S.$ Clearly, $x(nS) = nx(S),$ and since $S$ has least weight normal surface in its isotopy class, so does $nS$ because $S$ is 2--sided. To sum up, $nS$ is a closed, essential, normal surface which has least weight amongst all normal surfaces in its isotopy class.

For any $i,$ either $V_i$ or $2V_i$ is a $Q_0$--vertex surface. In the first case, we can write
$$nS + \Sigma = V + W,$$
where  $V = V_i$ and $x(W) =  -x(V_i) + \sum n_i x(V_i).$ Now $nS$ is 2--sided and of least weight, so Kang~\cite{kang05-spun}, Theorem 5.4 (which is an adaptation of the proof of \cite{jaco84-haken}, Theorem 2.2, to this context), shows that if one writes $nS + \Sigma = V' + W'$ in reduced form, then $V'$ is incompressible. Now $V'$ is isotopic in $M$ to $V,$ and hence $V$ is also incompressible. If $2V_i$ is a $Q_0$--vertex surface, then we apply the above argument to $2nS,$ writing $2nS + \Sigma = V + W,$ where $x(V)=2x(V_i)$ and
$x(W) =  -x(V) + 2\sum n_i x(V_i).$ In either case, we obtain an incompressible $Q_0$--vertex surface $V.$ Since Euler characteristic is additive and $S$ is not a sphere, there is some $V_i$ with $\chi(V_i)\le 0.$ If $\chi(V_i)<0,$ then $V$ is essential, and if $\chi(V_i)=0,$ then $V$ may be essential or boundary-parallel. Hence if $\chi(S)<0,$ the proof of the theorem is complete.

Hence assume $\chi(S)=0,$ and finish the proof with an argument from the proof of Proposition 6.3.21 from \cite{matveev03-algms}. For the sake of a contradiction, suppose that some $V_i$ is a boundary parallel torus that is not vertex linking. Write  $nS + \Sigma = V + W,$ where $V$ is a boundary parallel torus that is not vertex linking, and suppose that the sum is in reduced form. As in \cite{jaco84-haken}, Lemma 2.1, it follows that each patch is incompressible and not a disc. Denote $M_2$ a component of $M \setminus V$ that is homeomorphic to $V \times (0,1).$ If $W \cap M_2\neq \emptyset,$ then it consists of a pairwise disjoint union of annuli. Choosing an innermost annulus $A \subseteq W \cap M_2,$ there is an annulus $A' \subseteq V$ such that $\partial A' = \partial A,$ and there is an isotopy from $A$ to $A'$ keeping the boundaries fixed. But this implies that $V + W$ is not in reduced form, giving a contradiction.
\end{proof}

\begin{remark}
The statement of Theorem~\ref{thm:some extremal is closed essential} considerably strengthens the statement of Theorem~5.5 in \cite{kang05-spun}, and our proof fills a gap in its proof.
\end{remark}

\section{Proof of Theorem~\ref{thm-incompressible}} \label{app-incompressible}

Here we give a full proof of correctness for
Algorithm~\ref{alg-incompressible}.  Recall that this algorithm takes
a closed two-sided normal surface $S$ of genus $g$ within an ideal
triangulation $\tri$ of a knot complement in $S^3$, and tests whether
this surface is incompressible.

\begin{reptheorem}{thm-incompressible}
    Algorithm~\ref{alg-incompressible} terminates, and its output is correct.
\end{reptheorem}

\begin{proof}
    As noted in the main body of the paper, termination is
    straightforward: each time we loop back to step~\ref{en-alg-simplify}
    we have strictly fewer tetrahedra than before.  We now devote
    ourselves to proving the many claims that are made throughout the
    statement of Algorithm~\ref{alg-incompressible}.

    Before proceeding, however, we make a brief note regarding
    irreducibility.
    Since the underlying knot is embedded in $S^3$,
    every sphere in the complement $M$ must bound a ball; that is,
    $M$ is irreducible.  As a result, the two manifolds $M_1$ and $M_2$
    are likewise irreducible, with the following possible exception:
    it might be the case that $M_2$ has an embedded sphere that
    separates the genus $g$ boundary $B_2$ on one side from the
    torus boundary $B_v$ on the other.  However, in this case (by
    reattaching $M_1$) we find that $S$ is contained in a 3-ball within
    the complement $M$; in particular, $S$ has a compressing disc within
    $M_1$.

    We proceed now with proofs of the various claims made in
    Algorithm~\ref{alg-incompressible}.

    \begin{itemize}
        \item \emph{In step~\ref{en-alg-truncate} we claim that cutting along
        $S$ yields two compact manifolds}.

        This is because $S$ is a closed surface embedded in a knot
        complement, which itself is a submanifold of the 3-sphere.
        Since every closed surface in the 3-sphere is separating, the
        claim follows.

        \item \emph{In step~\ref{en-alg-nodisc} we claim that, if the surface
        $E$ cannot be found in $\tri_1$ \emph{and} it cannot be found in
        $\tri_2$, then the original surface $S$ must be incompressible.}

        Suppose that $S$ were compressible, with a compression disc
        in some $M_i$.  If this $M_i$ is irreducible,
        then by a result of Jaco and Oertel \cite[Lemma~4.1]{jaco84-haken}
        there is a \emph{normal} compressing disc in $\tri_i$.
        Since the underlying knot is non-trivial, this compressing disc
        must meet the genus $g$ boundary $B_i$ (not the torus
        boundary $B_v$), and so it is a surface of the type we are
        searching for.
        If $M_i$ is reducible then (from earlier) we have $i=2$, there
        is a compressing disc within the irreducible manifold $M_1$, and
        by the argument above the surface $E$ can be found within $\tri_1$.

        \item \emph{In step~\ref{en-alg-crush} we claim that
        the new triangulation $\tri'_i$ has strictly fewer tetrahedra
        than $\tri_i$.}

        This is because $E$ is connected but not a vertex link, and
        therefore contains at least one normal quadrilateral.
        As noted in Section~\ref{subsec:Crushing},
        this means that at least one tetrahedron of $\tri_i$ is deleted
        in the Jaco-Rubinstein crushing process.

        \item \emph{In step~\ref{en-alg-crush} we claim that
        if $\tri'_i$ has a component with the same genus boundary
        (or boundaries) as $\tri_i$ then this component represents the same
        manifold $M_i$, and if not then $S$ is compressible.}

        Since the surface $E$ that we crush is connected with positive
        Euler characteristic and can be embedded within a knot complement
        (and hence the 3-sphere), it follows that $E$ is either a sphere
        or a disc.
        From Section~\ref{subsec:Crushing}, this means that when we crush $E$
        in the triangulation $\tri_i$, the resulting manifold is obtained
        from $M_i$ by zero or more of the following operations:
        \begin{itemize}
            \item undoing connected sums;
            \item cutting open along properly embedded discs;
            \item filling boundary spheres with 3-balls;
            \item deleting 3-ball, 3-sphere, $\R P^3$, $L_{3,1}$ or
            $S^2 \times S^1$ components.
        \end{itemize}

        We first note that, since all of the manifolds we consider can
        be expressed as submanifolds of $S^3$, we will never create or
        delete an $\R P^3$, $L_{3,1}$ or $S^2 \times S^1$ component.

        Suppose that $M_i$ is irreducible.  Then undoing a connected sum
        simply has the effect of creating an extra 3-sphere component.
        If we ever cut along a properly embedded disc that is \emph{not}
        a compressing disc, then likewise this just creates an extra
        3-ball component.  If we cut along a compressing disc, then
        this yields one or two pieces with strictly smaller total
        boundary genus than before; moreover, since the underlying knot
        is non-trivial, the first such compression must take place along
        the genus $g$ boundary $B_g$ (not $B_v$) and so $S$ must be
        compressible.  Together these observations establish the full
        claim above.

        If $M_i$ is not irreducible, then as noted above $i=2$, and
        $M_2$ must contain a sphere that separates the boundary component
        $B_2$ from $B_v$.  Here there is an extra complication:
        either undoing a connected sum or cutting along a non-compressing disc
        might have the effect of splitting the manifold into two
        components, one with the single boundary $B_2$ and the other
        with the single boundary $B_v$.  In this case there will no
        longer be a component in $\tri_2'$ with the same genus boundaries
        as $\tri_2$; however, as noted earlier $S$ is compressible on
        the side of $M_1$, and so the claim above remains correct.
        \qedhere
    \end{itemize}
\end{proof}

\section{Proof of Theorem~\ref{thm-large}} \label{app-large}

Our final proof is of the correctness of Algorithm~\ref{alg-large},
the full algorithm for testing whether a non-trivial knot $K$ in $S^3$ is
large.

\begin{reptheorem}{thm-large}
    Algorithm~\ref{alg-large} terminates, and its output is correct.
\end{reptheorem}

\begin{proof}
    As noted in the main body of the paper, it is clear that the
    algorithm terminates since there is no looping.  All that remains is
    to prove that its output is correct.

    Throughout this proof we
    implicitly use Theorem~\ref{thm-incompressible} to verify that
    all calls to Algorithm~\ref{alg-incompressible} are themselves correct.
    We note that, since the knot $K$ is non-trivial, the complement
    is irreducible and $\partial$--irreducible and so the conditions of
    Theorem~\ref{thm:some extremal is closed essential} apply.
    We also note that all closed surfaces embedded in a knot complement
    in $S^3$ must be orientable, and so we implicitly treat all closed surfaces
    as orientable from here onwards.

    From Theorem~\ref{thm:some extremal is closed essential},
    the knot $K$ is large if and only if one of the closed normal
    surfaces $S_i$ in our list (excluding spheres) is essential.
    We note that each genus $\geq 2$ surface in the list is essential
    if and only if it is incompressible (since such a surface cannot be
    boundary parallel),
    and each torus in the list is essential if and only if it is
    (i)~incompressible and (ii)~not boundary parallel.

    Steps~\ref{en-test-essential} and \ref{en-test-special} of
    Algorithm~\ref{alg-large} test precisely these conditions, and so
    the algorithm is correct.  The only reason for the complex logic in
    these steps is so that we can use Algorithm~\ref{alg-incompressible}
    exclusively if possible, and only fall through to the more expensive
    Jaco-Tollefson algorithm when absolutely necessary.
\end{proof}


\section{Tables of large knots} \label{app-knots}

We finish with the detailed results of running
Algorithm~\ref{alg-large} over all $2977$ knots in the census of
non-trivial prime knots with $\leq 12$ crossings.
The following list presents all $1019$ knots in this census
that are large.  The knots are identified using
their names in the {\knotinfo} database \cite{www-knotinfo}.

\newpage
{
\centering\footnotesize
\begin{multicols}{9}
$8_{16}$ \\
$8_{17}$ \\
$9_{29}$ \\
$9_{32}$ \\
$9_{33}$ \\
$9_{38}$ \\
$10_{79}$ \\
$10_{80}$ \\
$10_{81}$ \\
$10_{82}$ \\
$10_{83}$ \\
$10_{84}$ \\
$10_{85}$ \\
$10_{86}$ \\
$10_{87}$ \\
$10_{88}$ \\
$10_{89}$ \\
$10_{90}$ \\
$10_{91}$ \\
$10_{92}$ \\
$10_{93}$ \\
$10_{94}$ \\
$10_{95}$ \\
$10_{96}$ \\
$10_{97}$ \\
$10_{98}$ \\
$10_{99}$ \\
$10_{109}$ \\
$10_{111}$ \\
$10_{116}$ \\
$10_{117}$ \\
$10_{122}$ \\
$10_{123}$ \\
$10_{148}$ \\
$10_{149}$ \\
$10_{150}$ \\
$10_{151}$ \\
$10_{152}$ \\
$10_{153}$ \\
$10_{154}$ \\
$11a_{2}$ \\
$11a_{3}$ \\
$11a_{14}$ \\
$11a_{15}$ \\
$11a_{17}$ \\
$11a_{18}$ \\
$11a_{19}$ \\
$11a_{20}$ \\
$11a_{22}$ \\
$11a_{24}$ \\
$11a_{25}$ \\
$11a_{26}$ \\
$11a_{27}$ \\
$11a_{28}$ \\
$11a_{29}$ \\
$11a_{30}$ \\
$11a_{38}$ \\
$11a_{43}$ \\
$11a_{44}$ \\
$11a_{47}$ \\
$11a_{49}$ \\
$11a_{52}$ \\
$11a_{53}$ \\
$11a_{54}$ \\
$11a_{57}$ \\
$11a_{66}$ \\
$11a_{67}$ \\
$11a_{68}$ \\
$11a_{69}$ \\
$11a_{70}$ \\
$11a_{71}$ \\
$11a_{72}$ \\
$11a_{76}$ \\
$11a_{79}$ \\
$11a_{80}$ \\
$11a_{81}$ \\
$11a_{102}$ \\
$11a_{123}$ \\
$11a_{124}$ \\
$11a_{126}$ \\
$11a_{127}$ \\
$11a_{129}$ \\
$11a_{130}$ \\
$11a_{131}$ \\
$11a_{132}$ \\
$11a_{138}$ \\
$11a_{141}$ \\
$11a_{147}$ \\
$11a_{149}$ \\
$11a_{150}$ \\
$11a_{151}$ \\
$11a_{152}$ \\
$11a_{156}$ \\
$11a_{157}$ \\
$11a_{164}$ \\
$11a_{167}$ \\
$11a_{172}$ \\
$11a_{173}$ \\
$11a_{231}$ \\
$11a_{232}$ \\
$11a_{244}$ \\
$11a_{250}$ \\
$11a_{251}$ \\
$11a_{252}$ \\
$11a_{253}$ \\
$11a_{254}$ \\
$11a_{261}$ \\
$11a_{262}$ \\
$11a_{263}$ \\
$11a_{264}$ \\
$11a_{265}$ \\
$11a_{266}$ \\
$11a_{267}$ \\
$11a_{269}$ \\
$11a_{273}$ \\
$11a_{274}$ \\
$11a_{275}$ \\
$11a_{281}$ \\
$11a_{284}$ \\
$11a_{286}$ \\
$11a_{287}$ \\
$11a_{288}$ \\
$11a_{290}$ \\
$11a_{291}$ \\
$11a_{292}$ \\
$11a_{293}$ \\
$11a_{294}$ \\
$11a_{298}$ \\
$11a_{299}$ \\
$11a_{300}$ \\
$11a_{301}$ \\
$11a_{304}$ \\
$11a_{305}$ \\
$11a_{314}$ \\
$11a_{315}$ \\
$11a_{316}$ \\
$11a_{323}$ \\
$11a_{327}$ \\
$11a_{328}$ \\
$11a_{332}$ \\
$11a_{346}$ \\
$11a_{347}$ \\
$11a_{350}$ \\
$11a_{353}$ \\
$11a_{354}$ \\
$11n_{4}$ \\
$11n_{5}$ \\
$11n_{6}$ \\
$11n_{7}$ \\
$11n_{8}$ \\
$11n_{9}$ \\
$11n_{10}$ \\
$11n_{11}$ \\
$11n_{21}$ \\
$11n_{22}$ \\
$11n_{23}$ \\
$11n_{24}$ \\
$11n_{25}$ \\
$11n_{26}$ \\
$11n_{27}$ \\
$11n_{31}$ \\
$11n_{32}$ \\
$11n_{33}$ \\
$11n_{34}$ \\
$11n_{35}$ \\
$11n_{36}$ \\
$11n_{37}$ \\
$11n_{39}$ \\
$11n_{40}$ \\
$11n_{41}$ \\
$11n_{42}$ \\
$11n_{43}$ \\
$11n_{44}$ \\
$11n_{45}$ \\
$11n_{46}$ \\
$11n_{47}$ \\
$11n_{65}$ \\
$11n_{66}$ \\
$11n_{67}$ \\
$11n_{68}$ \\
$11n_{69}$ \\
$11n_{71}$ \\
$11n_{72}$ \\
$11n_{73}$ \\
$11n_{74}$ \\
$11n_{75}$ \\
$11n_{76}$ \\
$11n_{77}$ \\
$11n_{78}$ \\
$11n_{80}$ \\
$11n_{81}$ \\
$11n_{97}$ \\
$11n_{98}$ \\
$11n_{99}$ \\
$11n_{151}$ \\
$11n_{152}$ \\
$11n_{156}$ \\
$11n_{160}$ \\
$11n_{166}$ \\
$11n_{182}$ \\
$12a_{0001}$ \\
$12a_{0002}$ \\
$12a_{0004}$ \\
$12a_{0005}$ \\
$12a_{0006}$ \\
$12a_{0007}$ \\
$12a_{0008}$ \\
$12a_{0010}$ \\
$12a_{0011}$ \\
$12a_{0013}$ \\
$12a_{0014}$ \\
$12a_{0015}$ \\
$12a_{0023}$ \\
$12a_{0029}$ \\
$12a_{0030}$ \\
$12a_{0033}$ \\
$12a_{0036}$ \\
$12a_{0039}$ \\
$12a_{0040}$ \\
$12a_{0041}$ \\
$12a_{0043}$ \\
$12a_{0044}$ \\
$12a_{0045}$ \\
$12a_{0046}$ \\
$12a_{0047}$ \\
$12a_{0048}$ \\
$12a_{0049}$ \\
$12a_{0050}$ \\
$12a_{0051}$ \\
$12a_{0052}$ \\
$12a_{0053}$ \\
$12a_{0054}$ \\
$12a_{0055}$ \\
$12a_{0057}$ \\
$12a_{0058}$ \\
$12a_{0059}$ \\
$12a_{0060}$ \\
$12a_{0061}$ \\
$12a_{0063}$ \\
$12a_{0064}$ \\
$12a_{0065}$ \\
$12a_{0066}$ \\
$12a_{0067}$ \\
$12a_{0068}$ \\
$12a_{0069}$ \\
$12a_{0070}$ \\
$12a_{0071}$ \\
$12a_{0072}$ \\
$12a_{0073}$ \\
$12a_{0074}$ \\
$12a_{0075}$ \\
$12a_{0076}$ \\
$12a_{0079}$ \\
$12a_{0080}$ \\
$12a_{0088}$ \\
$12a_{0089}$ \\
$12a_{0090}$ \\
$12a_{0091}$ \\
$12a_{0092}$ \\
$12a_{0093}$ \\
$12a_{0094}$ \\
$12a_{0100}$ \\
$12a_{0101}$ \\
$12a_{0102}$ \\
$12a_{0103}$ \\
$12a_{0105}$ \\
$12a_{0107}$ \\
$12a_{0108}$ \\
$12a_{0109}$ \\
$12a_{0111}$ \\
$12a_{0113}$ \\
$12a_{0114}$ \\
$12a_{0115}$ \\
$12a_{0116}$ \\
$12a_{0117}$ \\
$12a_{0119}$ \\
$12a_{0120}$ \\
$12a_{0122}$ \\
$12a_{0123}$ \\
$12a_{0125}$ \\
$12a_{0126}$ \\
$12a_{0127}$ \\
$12a_{0129}$ \\
$12a_{0131}$ \\
$12a_{0132}$ \\
$12a_{0133}$ \\
$12a_{0134}$ \\
$12a_{0135}$ \\
$12a_{0136}$ \\
$12a_{0137}$ \\
$12a_{0138}$ \\
$12a_{0139}$ \\
$12a_{0140}$ \\
$12a_{0150}$ \\
$12a_{0154}$ \\
$12a_{0155}$ \\
$12a_{0156}$ \\
$12a_{0157}$ \\
$12a_{0162}$ \\
$12a_{0163}$ \\
$12a_{0164}$ \\
$12a_{0166}$ \\
$12a_{0167}$ \\
$12a_{0177}$ \\
$12a_{0182}$ \\
$12a_{0184}$ \\
$12a_{0185}$ \\
$12a_{0186}$ \\
$12a_{0187}$ \\
$12a_{0188}$ \\
$12a_{0189}$ \\
$12a_{0190}$ \\
$12a_{0191}$ \\
$12a_{0192}$ \\
$12a_{0195}$ \\
$12a_{0198}$ \\
$12a_{0199}$ \\
$12a_{0200}$ \\
$12a_{0201}$ \\
$12a_{0207}$ \\
$12a_{0211}$ \\
$12a_{0213}$ \\
$12a_{0214}$ \\
$12a_{0224}$ \\
$12a_{0227}$ \\
$12a_{0228}$ \\
$12a_{0231}$ \\
$12a_{0233}$ \\
$12a_{0244}$ \\
$12a_{0245}$ \\
$12a_{0260}$ \\
$12a_{0266}$ \\
$12a_{0268}$ \\
$12a_{0269}$ \\
$12a_{0271}$ \\
$12a_{0273}$ \\
$12a_{0276}$ \\
$12a_{0282}$ \\
$12a_{0283}$ \\
$12a_{0286}$ \\
$12a_{0288}$ \\
$12a_{0290}$ \\
$12a_{0291}$ \\
$12a_{0311}$ \\
$12a_{0312}$ \\
$12a_{0313}$ \\
$12a_{0315}$ \\
$12a_{0318}$ \\
$12a_{0328}$ \\
$12a_{0331}$ \\
$12a_{0333}$ \\
$12a_{0334}$ \\
$12a_{0336}$ \\
$12a_{0337}$ \\
$12a_{0341}$ \\
$12a_{0343}$ \\
$12a_{0344}$ \\
$12a_{0348}$ \\
$12a_{0350}$ \\
$12a_{0351}$ \\
$12a_{0353}$ \\
$12a_{0355}$ \\
$12a_{0361}$ \\
$12a_{0365}$ \\
$12a_{0366}$ \\
$12a_{0377}$ \\
$12a_{0389}$ \\
$12a_{0390}$ \\
$12a_{0396}$ \\
$12a_{0398}$ \\
$12a_{0399}$ \\
$12a_{0407}$ \\
$12a_{0409}$ \\
$12a_{0410}$ \\
$12a_{0411}$ \\
$12a_{0412}$ \\
$12a_{0414}$ \\
$12a_{0416}$ \\
$12a_{0417}$ \\
$12a_{0427}$ \\
$12a_{0428}$ \\
$12a_{0429}$ \\
$12a_{0430}$ \\
$12a_{0432}$ \\
$12a_{0434}$ \\
$12a_{0435}$ \\
$12a_{0439}$ \\
$12a_{0440}$ \\
$12a_{0441}$ \\
$12a_{0446}$ \\
$12a_{0452}$ \\
$12a_{0455}$ \\
$12a_{0456}$ \\
$12a_{0460}$ \\
$12a_{0462}$ \\
$12a_{0465}$ \\
$12a_{0466}$ \\
$12a_{0468}$ \\
$12a_{0475}$ \\
$12a_{0479}$ \\
$12a_{0483}$ \\
$12a_{0484}$ \\
$12a_{0489}$ \\
$12a_{0490}$ \\
$12a_{0491}$ \\
$12a_{0493}$ \\
$12a_{0494}$ \\
$12a_{0495}$ \\
$12a_{0509}$ \\
$12a_{0523}$ \\
$12a_{0526}$ \\
$12a_{0527}$ \\
$12a_{0554}$ \\
$12a_{0556}$ \\
$12a_{0567}$ \\
$12a_{0587}$ \\
$12a_{0588}$ \\
$12a_{0589}$ \\
$12a_{0590}$ \\
$12a_{0598}$ \\
$12a_{0599}$ \\
$12a_{0605}$ \\
$12a_{0606}$ \\
$12a_{0612}$ \\
$12a_{0613}$ \\
$12a_{0617}$ \\
$12a_{0620}$ \\
$12a_{0623}$ \\
$12a_{0624}$ \\
$12a_{0627}$ \\
$12a_{0629}$ \\
$12a_{0630}$ \\
$12a_{0633}$ \\
$12a_{0634}$ \\
$12a_{0637}$ \\
$12a_{0638}$ \\
$12a_{0639}$ \\
$12a_{0640}$ \\
$12a_{0645}$ \\
$12a_{0647}$ \\
$12a_{0654}$ \\
$12a_{0655}$ \\
$12a_{0657}$ \\
$12a_{0658}$ \\
$12a_{0659}$ \\
$12a_{0667}$ \\
$12a_{0668}$ \\
$12a_{0672}$ \\
$12a_{0675}$ \\
$12a_{0676}$ \\
$12a_{0680}$ \\
$12a_{0685}$ \\
$12a_{0688}$ \\
$12a_{0692}$ \\
$12a_{0693}$ \\
$12a_{0694}$ \\
$12a_{0697}$ \\
$12a_{0698}$ \\
$12a_{0699}$ \\
$12a_{0701}$ \\
$12a_{0702}$ \\
$12a_{0703}$ \\
$12a_{0705}$ \\
$12a_{0706}$ \\
$12a_{0707}$ \\
$12a_{0708}$ \\
$12a_{0709}$ \\
$12a_{0710}$ \\
$12a_{0741}$ \\
$12a_{0750}$ \\
$12a_{0755}$ \\
$12a_{0771}$ \\
$12a_{0798}$ \\
$12a_{0801}$ \\
$12a_{0804}$ \\
$12a_{0811}$ \\
$12a_{0812}$ \\
$12a_{0813}$ \\
$12a_{0814}$ \\
$12a_{0815}$ \\
$12a_{0816}$ \\
$12a_{0817}$ \\
$12a_{0818}$ \\
$12a_{0824}$ \\
$12a_{0825}$ \\
$12a_{0828}$ \\
$12a_{0829}$ \\
$12a_{0830}$ \\
$12a_{0831}$ \\
$12a_{0832}$ \\
$12a_{0833}$ \\
$12a_{0834}$ \\
$12a_{0841}$ \\
$12a_{0844}$ \\
$12a_{0845}$ \\
$12a_{0846}$ \\
$12a_{0847}$ \\
$12a_{0848}$ \\
$12a_{0849}$ \\
$12a_{0850}$ \\
$12a_{0851}$ \\
$12a_{0852}$ \\
$12a_{0853}$ \\
$12a_{0854}$ \\
$12a_{0859}$ \\
$12a_{0860}$ \\
$12a_{0861}$ \\
$12a_{0862}$ \\
$12a_{0863}$ \\
$12a_{0866}$ \\
$12a_{0867}$ \\
$12a_{0868}$ \\
$12a_{0871}$ \\
$12a_{0872}$ \\
$12a_{0873}$ \\
$12a_{0874}$ \\
$12a_{0875}$ \\
$12a_{0884}$ \\
$12a_{0885}$ \\
$12a_{0886}$ \\
$12a_{0888}$ \\
$12a_{0891}$ \\
$12a_{0893}$ \\
$12a_{0894}$ \\
$12a_{0895}$ \\
$12a_{0898}$ \\
$12a_{0899}$ \\
$12a_{0900}$ \\
$12a_{0903}$ \\
$12a_{0906}$ \\
$12a_{0909}$ \\
$12a_{0910}$ \\
$12a_{0911}$ \\
$12a_{0912}$ \\
$12a_{0913}$ \\
$12a_{0914}$ \\
$12a_{0916}$ \\
$12a_{0917}$ \\
$12a_{0923}$ \\
$12a_{0924}$ \\
$12a_{0930}$ \\
$12a_{0931}$ \\
$12a_{0933}$ \\
$12a_{0934}$ \\
$12a_{0936}$ \\
$12a_{0939}$ \\
$12a_{0940}$ \\
$12a_{0941}$ \\
$12a_{0942}$ \\
$12a_{0944}$ \\
$12a_{0945}$ \\
$12a_{0946}$ \\
$12a_{0947}$ \\
$12a_{0948}$ \\
$12a_{0949}$ \\
$12a_{0951}$ \\
$12a_{0952}$ \\
$12a_{0953}$ \\
$12a_{0956}$ \\
$12a_{0957}$ \\
$12a_{0958}$ \\
$12a_{0959}$ \\
$12a_{0960}$ \\
$12a_{0961}$ \\
$12a_{0964}$ \\
$12a_{0965}$ \\
$12a_{0966}$ \\
$12a_{0970}$ \\
$12a_{0971}$ \\
$12a_{0975}$ \\
$12a_{0976}$ \\
$12a_{0979}$ \\
$12a_{0981}$ \\
$12a_{0982}$ \\
$12a_{0986}$ \\
$12a_{0987}$ \\
$12a_{0990}$ \\
$12a_{0994}$ \\
$12a_{0999}$ \\
$12a_{1000}$ \\
$12a_{1002}$ \\
$12a_{1003}$ \\
$12a_{1004}$ \\
$12a_{1007}$ \\
$12a_{1008}$ \\
$12a_{1009}$ \\
$12a_{1010}$ \\
$12a_{1011}$ \\
$12a_{1012}$ \\
$12a_{1013}$ \\
$12a_{1014}$ \\
$12a_{1015}$ \\
$12a_{1016}$ \\
$12a_{1017}$ \\
$12a_{1018}$ \\
$12a_{1019}$ \\
$12a_{1020}$ \\
$12a_{1021}$ \\
$12a_{1036}$ \\
$12a_{1038}$ \\
$12a_{1043}$ \\
$12a_{1050}$ \\
$12a_{1053}$ \\
$12a_{1057}$ \\
$12a_{1059}$ \\
$12a_{1061}$ \\
$12a_{1062}$ \\
$12a_{1064}$ \\
$12a_{1065}$ \\
$12a_{1067}$ \\
$12a_{1070}$ \\
$12a_{1074}$ \\
$12a_{1076}$ \\
$12a_{1078}$ \\
$12a_{1079}$ \\
$12a_{1080}$ \\
$12a_{1081}$ \\
$12a_{1083}$ \\
$12a_{1087}$ \\
$12a_{1088}$ \\
$12a_{1091}$ \\
$12a_{1092}$ \\
$12a_{1093}$ \\
$12a_{1095}$ \\
$12a_{1096}$ \\
$12a_{1097}$ \\
$12a_{1098}$ \\
$12a_{1099}$ \\
$12a_{1100}$ \\
$12a_{1101}$ \\
$12a_{1102}$ \\
$12a_{1103}$ \\
$12a_{1105}$ \\
$12a_{1109}$ \\
$12a_{1110}$ \\
$12a_{1111}$ \\
$12a_{1117}$ \\
$12a_{1119}$ \\
$12a_{1120}$ \\
$12a_{1121}$ \\
$12a_{1122}$ \\
$12a_{1123}$ \\
$12a_{1124}$ \\
$12a_{1156}$ \\
$12a_{1167}$ \\
$12a_{1173}$ \\
$12a_{1175}$ \\
$12a_{1181}$ \\
$12a_{1184}$ \\
$12a_{1185}$ \\
$12a_{1186}$ \\
$12a_{1187}$ \\
$12a_{1190}$ \\
$12a_{1191}$ \\
$12a_{1192}$ \\
$12a_{1193}$ \\
$12a_{1194}$ \\
$12a_{1196}$ \\
$12a_{1202}$ \\
$12a_{1203}$ \\
$12a_{1204}$ \\
$12a_{1205}$ \\
$12a_{1209}$ \\
$12a_{1211}$ \\
$12a_{1212}$ \\
$12a_{1213}$ \\
$12a_{1215}$ \\
$12a_{1216}$ \\
$12a_{1217}$ \\
$12a_{1218}$ \\
$12a_{1219}$ \\
$12a_{1220}$ \\
$12a_{1221}$ \\
$12a_{1222}$ \\
$12a_{1225}$ \\
$12a_{1229}$ \\
$12a_{1230}$ \\
$12a_{1231}$ \\
$12a_{1232}$ \\
$12a_{1237}$ \\
$12a_{1246}$ \\
$12a_{1248}$ \\
$12a_{1249}$ \\
$12a_{1251}$ \\
$12a_{1252}$ \\
$12a_{1253}$ \\
$12a_{1254}$ \\
$12a_{1255}$ \\
$12a_{1256}$ \\
$12a_{1257}$ \\
$12a_{1260}$ \\
$12a_{1261}$ \\
$12a_{1263}$ \\
$12a_{1266}$ \\
$12a_{1267}$ \\
$12a_{1269}$ \\
$12a_{1270}$ \\
$12a_{1271}$ \\
$12a_{1272}$ \\
$12a_{1288}$ \\
$12n_{0001}$ \\
$12n_{0002}$ \\
$12n_{0003}$ \\
$12n_{0004}$ \\
$12n_{0005}$ \\
$12n_{0006}$ \\
$12n_{0007}$ \\
$12n_{0008}$ \\
$12n_{0009}$ \\
$12n_{0010}$ \\
$12n_{0014}$ \\
$12n_{0015}$ \\
$12n_{0016}$ \\
$12n_{0017}$ \\
$12n_{0018}$ \\
$12n_{0019}$ \\
$12n_{0020}$ \\
$12n_{0021}$ \\
$12n_{0022}$ \\
$12n_{0023}$ \\
$12n_{0024}$ \\
$12n_{0026}$ \\
$12n_{0027}$ \\
$12n_{0028}$ \\
$12n_{0029}$ \\
$12n_{0030}$ \\
$12n_{0031}$ \\
$12n_{0032}$ \\
$12n_{0033}$ \\
$12n_{0034}$ \\
$12n_{0049}$ \\
$12n_{0050}$ \\
$12n_{0051}$ \\
$12n_{0052}$ \\
$12n_{0053}$ \\
$12n_{0055}$ \\
$12n_{0056}$ \\
$12n_{0057}$ \\
$12n_{0058}$ \\
$12n_{0059}$ \\
$12n_{0060}$ \\
$12n_{0061}$ \\
$12n_{0062}$ \\
$12n_{0063}$ \\
$12n_{0064}$ \\
$12n_{0066}$ \\
$12n_{0067}$ \\
$12n_{0068}$ \\
$12n_{0069}$ \\
$12n_{0070}$ \\
$12n_{0071}$ \\
$12n_{0072}$ \\
$12n_{0073}$ \\
$12n_{0074}$ \\
$12n_{0075}$ \\
$12n_{0076}$ \\
$12n_{0080}$ \\
$12n_{0081}$ \\
$12n_{0082}$ \\
$12n_{0083}$ \\
$12n_{0084}$ \\
$12n_{0085}$ \\
$12n_{0086}$ \\
$12n_{0087}$ \\
$12n_{0088}$ \\
$12n_{0089}$ \\
$12n_{0090}$ \\
$12n_{0091}$ \\
$12n_{0092}$ \\
$12n_{0093}$ \\
$12n_{0094}$ \\
$12n_{0095}$ \\
$12n_{0096}$ \\
$12n_{0097}$ \\
$12n_{0098}$ \\
$12n_{0099}$ \\
$12n_{0100}$ \\
$12n_{0101}$ \\
$12n_{0102}$ \\
$12n_{0103}$ \\
$12n_{0104}$ \\
$12n_{0105}$ \\
$12n_{0106}$ \\
$12n_{0107}$ \\
$12n_{0108}$ \\
$12n_{0109}$ \\
$12n_{0110}$ \\
$12n_{0111}$ \\
$12n_{0112}$ \\
$12n_{0113}$ \\
$12n_{0114}$ \\
$12n_{0115}$ \\
$12n_{0116}$ \\
$12n_{0117}$ \\
$12n_{0118}$ \\
$12n_{0119}$ \\
$12n_{0120}$ \\
$12n_{0122}$ \\
$12n_{0123}$ \\
$12n_{0124}$ \\
$12n_{0125}$ \\
$12n_{0126}$ \\
$12n_{0127}$ \\
$12n_{0128}$ \\
$12n_{0129}$ \\
$12n_{0130}$ \\
$12n_{0131}$ \\
$12n_{0132}$ \\
$12n_{0133}$ \\
$12n_{0134}$ \\
$12n_{0135}$ \\
$12n_{0136}$ \\
$12n_{0137}$ \\
$12n_{0138}$ \\
$12n_{0139}$ \\
$12n_{0140}$ \\
$12n_{0141}$ \\
$12n_{0156}$ \\
$12n_{0157}$ \\
$12n_{0158}$ \\
$12n_{0173}$ \\
$12n_{0174}$ \\
$12n_{0175}$ \\
$12n_{0176}$ \\
$12n_{0177}$ \\
$12n_{0178}$ \\
$12n_{0179}$ \\
$12n_{0180}$ \\
$12n_{0181}$ \\
$12n_{0182}$ \\
$12n_{0183}$ \\
$12n_{0184}$ \\
$12n_{0185}$ \\
$12n_{0186}$ \\
$12n_{0187}$ \\
$12n_{0188}$ \\
$12n_{0189}$ \\
$12n_{0190}$ \\
$12n_{0191}$ \\
$12n_{0192}$ \\
$12n_{0193}$ \\
$12n_{0194}$ \\
$12n_{0195}$ \\
$12n_{0196}$ \\
$12n_{0197}$ \\
$12n_{0201}$ \\
$12n_{0202}$ \\
$12n_{0203}$ \\
$12n_{0204}$ \\
$12n_{0205}$ \\
$12n_{0206}$ \\
$12n_{0207}$ \\
$12n_{0208}$ \\
$12n_{0209}$ \\
$12n_{0210}$ \\
$12n_{0211}$ \\
$12n_{0212}$ \\
$12n_{0213}$ \\
$12n_{0214}$ \\
$12n_{0215}$ \\
$12n_{0216}$ \\
$12n_{0217}$ \\
$12n_{0219}$ \\
$12n_{0220}$ \\
$12n_{0221}$ \\
$12n_{0222}$ \\
$12n_{0223}$ \\
$12n_{0224}$ \\
$12n_{0225}$ \\
$12n_{0226}$ \\
$12n_{0227}$ \\
$12n_{0228}$ \\
$12n_{0229}$ \\
$12n_{0230}$ \\
$12n_{0231}$ \\
$12n_{0232}$ \\
$12n_{0245}$ \\
$12n_{0246}$ \\
$12n_{0247}$ \\
$12n_{0252}$ \\
$12n_{0253}$ \\
$12n_{0254}$ \\
$12n_{0255}$ \\
$12n_{0256}$ \\
$12n_{0257}$ \\
$12n_{0258}$ \\
$12n_{0259}$ \\
$12n_{0260}$ \\
$12n_{0261}$ \\
$12n_{0262}$ \\
$12n_{0263}$ \\
$12n_{0264}$ \\
$12n_{0265}$ \\
$12n_{0266}$ \\
$12n_{0267}$ \\
$12n_{0268}$ \\
$12n_{0269}$ \\
$12n_{0270}$ \\
$12n_{0290}$ \\
$12n_{0291}$ \\
$12n_{0292}$ \\
$12n_{0315}$ \\
$12n_{0317}$ \\
$12n_{0320}$ \\
$12n_{0326}$ \\
$12n_{0329}$ \\
$12n_{0330}$ \\
$12n_{0331}$ \\
$12n_{0335}$ \\
$12n_{0336}$ \\
$12n_{0344}$ \\
$12n_{0345}$ \\
$12n_{0346}$ \\
$12n_{0364}$ \\
$12n_{0365}$ \\
$12n_{0376}$ \\
$12n_{0421}$ \\
$12n_{0422}$ \\
$12n_{0423}$ \\
$12n_{0431}$ \\
$12n_{0437}$ \\
$12n_{0440}$ \\
$12n_{0448}$ \\
$12n_{0454}$ \\
$12n_{0455}$ \\
$12n_{0456}$ \\
$12n_{0484}$ \\
$12n_{0485}$ \\
$12n_{0494}$ \\
$12n_{0495}$ \\
$12n_{0496}$ \\
$12n_{0508}$ \\
$12n_{0518}$ \\
$12n_{0526}$ \\
$12n_{0533}$ \\
$12n_{0538}$ \\
$12n_{0539}$ \\
$12n_{0541}$ \\
$12n_{0548}$ \\
$12n_{0553}$ \\
$12n_{0554}$ \\
$12n_{0555}$ \\
$12n_{0556}$ \\
$12n_{0558}$ \\
$12n_{0567}$ \\
$12n_{0568}$ \\
$12n_{0600}$ \\
$12n_{0601}$ \\
$12n_{0602}$ \\
$12n_{0604}$ \\
$12n_{0605}$ \\
$12n_{0622}$ \\
$12n_{0633}$ \\
$12n_{0642}$ \\
$12n_{0658}$ \\
$12n_{0665}$ \\
$12n_{0670}$ \\
$12n_{0671}$ \\
$12n_{0672}$ \\
$12n_{0673}$ \\
$12n_{0674}$ \\
$12n_{0675}$ \\
$12n_{0676}$ \\
$12n_{0677}$ \\
$12n_{0678}$ \\
$12n_{0679}$ \\
$12n_{0680}$ \\
$12n_{0681}$ \\
$12n_{0682}$ \\
$12n_{0688}$ \\
$12n_{0689}$ \\
$12n_{0690}$ \\
$12n_{0691}$ \\
$12n_{0692}$ \\
$12n_{0693}$ \\
$12n_{0694}$ \\
$12n_{0695}$ \\
$12n_{0696}$ \\
$12n_{0697}$ \\
$12n_{0702}$ \\
$12n_{0705}$ \\
$12n_{0706}$ \\
$12n_{0712}$ \\
$12n_{0714}$ \\
$12n_{0716}$ \\
$12n_{0728}$ \\
$12n_{0747}$ \\
$12n_{0748}$ \\
$12n_{0751}$ \\
$12n_{0753}$ \\
$12n_{0766}$ \\
$12n_{0769}$ \\
$12n_{0775}$ \\
$12n_{0776}$ \\
$12n_{0779}$ \\
$12n_{0780}$ \\
$12n_{0781}$ \\
$12n_{0782}$ \\
$12n_{0790}$ \\
$12n_{0800}$ \\
$12n_{0802}$ \\
$12n_{0826}$ \\
$12n_{0831}$ \\
$12n_{0834}$ \\
$12n_{0837}$ \\
$12n_{0840}$ \\
$12n_{0842}$ \\
$12n_{0853}$ \\
$12n_{0854}$ \\
$12n_{0863}$ \\
$12n_{0864}$ \\
$12n_{0866}$ \\
$12n_{0868}$ \\
$12n_{0869}$ \\
$12n_{0874}$ \\
$12n_{0879}$ \\
$12n_{0886}$ \\
$12n_{0887}$ \\
$12n_{0888}$
\end{multicols}
}

\end{document}